\title[On periodic orbits]{}
\author[J.B. van den Berg,
F. Pasquotto, T.O. Rot and R.C.A.M. Vandervorst]{}
\begin{document}
\maketitle \noindent {\huge {\bf On periodic orbits in cotangent bundles of non-compact manifolds.}\footnote{Jan Bouwe van den Berg supported by NWO Vici grant 639.033.109, Federica Pasquotto supported in part by NWO Meervoud grant 632.002.901, and
Thomas Rot supported by
NWO grant 613.001.001.}} \vskip.8cm
\begin{sloppypar}
\noindent
Jan Bouwe van den Berg (janbouwe.vanden.berg@vu.nl),
Federica Pasquotto (f.pasquotto@vu.nl), Thomas O.~Rot (t.o.rot@vu.nl) and Robert C.A.M.~Vandervorst (r.c.a.m.vander.vorst@vu.nl)
\vskip.3cm

\noindent {\it Department of Mathematics, VU University Amsterdam, De Boelelaan 1081a,
1081 HV Amsterdam, the Netherlands.}
\vskip.3cm

\noindent {\bf Abstract.}  This paper is concerned with the existence of periodic orbits on energy hypersurfaces in cotangent bundles of Riemannian manifolds defined by mechanical Hamiltonians. In \cite{bpv} it was proved that, provided  certain geometric assumptions are satisfied, regular mechanical hypersurfaces in $\mR^{2n}$, in particular non-compact ones, contain periodic orbits if one homology group among the top half does not vanish.  In the present paper we extend the above mentioned existence result to a class of hypersurfaces in cotangent bundles of Riemannian manifolds with flat ends.

\vspace{0.1in}

\noindent {\em AMS Subject Class:} 37J05, 37J45, 70H12.

\noindent{\em Keywords:} Periodic orbits, Weinstein conjecture, Hamiltonian dynamics, free loop space, linking sets.

\section{Introduction}

The question of existence of periodic orbits of a Hamiltonian vector field $X_H$ on a given regular energy level, i.e. a level set $\Sigma=H^{-1}(0)$ of the Hamiltonian function $H$, with $dH\not=0$ on $\Sigma$, has been a central question in Hamiltonian dynamics and symplectic topology which has generated some of the most interesting recent developments in those areas. The existence of a periodic orbit does not depend on the Hamiltonian itself, but only on the geometry of the energy level that the Hamiltonian defines. For this reason one also speaks of closed characteristics of the energy level. 

After the first pioneering existence results of Rabinowitz\ \cite{rabinowitzstar} and\ \cite{rabinowitzenergy} and Weinstein~\cite{weinsteinconvex} for star-shaped and convex hypersurfaces respectively, Viterbo~\cite{viterboweinstein} proved the existence of closed characteristics on all compact hypersurfaces of $\mR^{2n}$ of so called \emph{contact type}. The latter notion was introduced by Weinstein as a generalization of both convex and star-shaped~\cite{weinsteinhypothesis}.  These first results were obtained by variational methods applied to a suitable (indefinite) action functional.

More recently, Floer, Hofer, Wysocki~\cite{floerhoferwysocki} and Viterbo~\cite{viterbofunctors}, provided an alternative proof of the same results (and much more) using the powerful tools of symplectic homology or Floer homology for manifolds with boundary.

Up to now though, very little is known about periodic orbits on \emph{non-compact} energy hypersurfaces: even the Floer homology type of technique mentioned above breaks down when one drops the compactness assumptions.  It is clear that additional geometric and topological assumptions are needed in order to make up for the lack of compactness. In~\cite{bpv} we were able to formulate a set of such assumptions that led to an existence result for the case of \emph{mechanical hypersurfaces} in $\mR^{2n}$, that is, hypersurfaces arising as level sets of Hamiltonian functions of the form kinetic plus potential energy. 

Mechanical hypersurfaces in cotangent bundles are an important class of contact manifolds since they occur naturally in conservative mechanical dynamics. In the case of compact mechanical hypersurfaces Bolotin~\cite{bolotin}, Benci~\cite{benci}, and Gluck and Ziller~\cite{Gluck:1983ut} show the existence of a closed characteristic on $\Sigma$ via closed geodesics of the Jacobi metric on the configuration manifold. 
A more general existence result for cotangent bundles is proved by Hofer and Viterbo in \cite{hoferviterbo} and improved in \cite{viterboexact}: Any connected compact hypersurface of contact type over a simply connected manifold has a closed characteristic, which confirms the  Weinstein Conjecture in cotangent bundles of simply connected manifolds.  However, the existence of closed characteristics for non-compact mechanical hypersurfaces is not covered by the result of Hofer and Viterbo and fails without additional geometric conditions. In this paper we address the question for non-compact mechanical hypersurfaces, in cotangent bundles of non-compact (smooth) Riemannian manifolds $(M,g)$. 

\subsection{Main result}

A Riemannian manifold is said to have \emph{flat ends} if the curvature tensor vanishes outside a compact set. The main theorem of this paper is the following existence result.
\begin{theorem}
\label{thm:main2}
Let $H:T^*M\rightarrow \mR$ be the Hamiltonian $H(q,\theta_q)=\frac 12g_q^*(\theta_q,\theta_q)+V(q)$, where
\begin{itemize}
\item $(M,g)$ is an $n$-dimensional complete orientable Riemannian manifold with flat ends;
\item $\Sigma=H^{-1}(0)$ is a regular hypersurface, i.e.,$dH\not=0$ on $\Sigma$;
\item $V$ is \emph{asymptotically regular}, i.e. there exist a compact set $K$ and a constant $\asympconstant>0$ such that
$$
|\grad V(q)| \geq \asympconstant, ~~\text{for}~~ q\in M\setminus K
  ~~\text{and}~~ 
  \frac{\|\hes V(q)\|}{|\grad V(q)|} \to 0,
~~\text{as}~~ d(q,K)\rightarrow \infty. 
$$
\end{itemize}
Assume moreover that there exists an integer $0\leq k\leq n-1$ such that 
\begin{enumerate}
\item[(i)]  $H_{k+1}(\Lambda M)=0$ and $H_{k+2}(\Lambda M)=0$, and
\item[(ii)] $H_{k+n}(\Sigma)\neq 0$.
\end{enumerate}
Then $\Sigma$ has a periodic orbit which is contractible in $T^*M$.
\end{theorem}

Here $\Lambda M$ denotes the free loop space of $H^1$ loops into $M$. The proof of Theorem~\ref{thm:main2} follows the scheme of the proof of the existence result for non-compact hypersurfaces in $\mR^{2n}$ presented in \cite{bpv}, and when arguments are identical as that paper, we refer to it without proof. We regard periodic orbits as critical points of a suitable action functional $\A$. The functional does not satisfy the Palais-Smale condition. Therefore we introduce a sequence of approximating functionals $\A_\epsilon$, for $\epsilon>0$, which do satisfy the Palais-Smale condition. Critical points of $\A_\epsilon$ satisfying certain bounds converge to critical points of $\A$ as $\epsilon\rightarrow 0$. Next, based on the assumptions on the topology of $\Sigma$ and $M$, we construct linking sets in $M$ and lift these to linking sets in the free loop space, where we apply a linking argument to produce critical points for the approximating functionals satisfying the appropriate bounds. These critical points then converge to a critical point of $\A$ as $\epsilon\rightarrow 0$. Because we construct the linking sets in the component of the loop space containing the contractible loops, this critical point corresponds to a contractible loop. Hence, due to our method of proof, in this paper we do not find non-contractible loops. 

One of the main issues in cotangent bundles (as opposed to $\mR^{2n}$) is that the linking arguments get more involved due to the topology of $M$. Another difficulty is that curvature terms appear in the analysis of the functional, which require some care. 

Theorem~\ref{thm:main2} directly generalizes the results of~\cite{bpv}. In~\cite{bpv} examples are given that show that both topological and geometric assumptions on $\Sigma$ are necessary. Theorem\ \ref{thm:main2} also improves the result in the $\mR^{2n}$ case, as it requires weaker assumptions on the metric.

After completion of this research, and based on the results presented in this paper, Suhr and Zehmisch~\cite{suhrzehmisch} were able to prove that some of the technical hypotheses of Theorem~\ref{thm:main2} can be removed. In particular, the main result holds for cotangent bundles over manifolds of bounded geometry and no assumptions on the homology of the loop space are necessary.

\subsection{Acknowledgement}
We would like to thank the referee for carefully reading our manuscript and for the many constructive comments and suggestions, which substantially improved the presentation of our result.

\section{The Palais-Smale condition}
\label{sec:PS}
Periodic orbits on $\Sigma$ can be regarded as critical points of the action functional 
\begin{equation*}
\B(q,T)=\int_0^T\left\{\frac{1}{2}|q'(t)|^2-V(q(t))\right\}\,dt,
\end{equation*}
for mappings $q: [0,T] \to M$ with $q(0)=q(T)$. Via the coordinate transformation
\begin{equation*}
(q(t),T)\mapsto (c(s),\tau)=\bigl(q(sT),\log(T)\bigr).
\end{equation*}
we obtain the rescaled action functional 
\begin{equation*}
\A(c,\tau)=\frac{e^{-\tau}}{2}\int_0^1|c'(s)|^2\,ds-e^\tau\int_0^1
V(c(s))\,ds,
\end{equation*}
for mappings $c: \mS \to M$ and $\tau \in \mR$, where $\S^1=[0,1]/\{0,1\}$ is the parameterized circle. The natural domain of the functional $\A$ is $\Lambda M\times \mR$, where $\Lambda M$ denotes the space of loops of Sobolev regularity $H^1$ in $M$. It is convenient to define $\cE(c)=\frac 12 \int|c'(s)|^2ds$ and $\cW(x)=\int_0^1 V(c(s))ds$ such that $\A(c,\tau)=e^{-\tau} \cE(c)-e^\tau \cW(c)$. There are some basic inequalities for the various metrics on the loop space which will be used in the analysis. The proofs, as well as more details on the construction of the loop space and the metrics, can be found in the books of Klingenberg \cite{Klingenberg, Klingenberg_Riemannian}. Let $c,\tilde c\in \Lambda M$ and $s,\tilde s\in \mS^1$, then
\begin{eqnarray}
d_M(c(s), c(\tilde s)) &\le& \sqrt{|s-\tilde s|}\sqrt{2\cE(c)},\label{eqn;metric-1}\\
d_{C^0}(c,\tilde c) &\le& \sqrt{2}\,\, d_{\Lambda M}(c,\tilde c),\label{eqn:metric-2}
\end{eqnarray}
where $d_{\Lambda M}$ is the metric induced by the Riemannian metric on $\Lambda M$, and $d_{C^0}(c,\tilde c)=\sup_{s\in \mS^1}d_M(c(s),\tilde c(s))$. This metric is complete if the metric induced by the Riemannian metric on $M$ is complete. See for example \cite[Theorem 2.4.7]{Klingenberg_Riemannian} where the proof also shows that $\Lambda M$ is complete as a metric space if $M$ is complete as a metric space. For $\xi\in T_c\Lambda M$, we have the estimate
\begin{equation}
\label{eq:inequality}
\Vert\xi\Vert_{L^2}\leq\Vert\xi\Vert_{C_0}\leq\sqrt{2}\Vert\xi\Vert_{H^1}.
\end{equation}
A direct computation gives the variation of the action. 
\begin{lemma}
\label{lem:1stvar}
The action $\A: \Lambda M \times \mR \to \mR$ is continuously differentiable. For any $(\xi,\sigma) \in T_{(c,\sigma)}\,\Lambda M \times \mR$ the first variation is given by
\begin{align*}
\label{eqn:1stvar}
d\A(c,\tau) (\xi,\sigma) & = e^{-\tau} \int_0^1 \langle c'(s),\triangledown \xi(s)\rangle ds
- e^\tau \int_0^1 \langle \grad V(c(s)), \xi(s)\rangle ds\\
&\quad~~~~~~ - \int_0^1 \Bigl[ \frac{e^{-\tau}}{2} |c'(s)|^2 + e^\tau 
V(c(s)) \Bigr] \sigma\,ds\\
&= d_c \A(c,\tau) \xi - \Bigl( e^{-\tau} \cE(c) + e^\tau \cW(c)\Bigr) \sigma,
\end{align*}
where 
  the gradient $\grad V$ is taken with respect to the metric $g$ on $M$.
\end{lemma}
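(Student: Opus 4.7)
The plan is to exploit the multiplicative splitting $\A(c,\tau) = e^{-\tau}\cE(c) - e^{\tau}\cW(c)$ and show that $\cE,\cW:\ls\to\R$ are each continuously differentiable with the differentials already computed in the paragraph preceding the lemma. Once this is in place, continuous differentiability of $\A$ and the explicit formula follow by applying the product rule in $\tau$ and assembling the partial derivatives, noting that the $\tau$-derivative is immediate because $\cE$ and $\cW$ are independent of $\tau$.

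For the variation in $c$, I fix $c\in\ls$, $\xi\in T_c\ls$, and a $C^1$-curve $\gamma:(-\epsilon,\epsilon)\to\ls$ with $\gamma(0)=c$ and $\partial_t\gamma|_{t=0}=\xi$, regarded as a map $\gamma(t,s)$ that is $H^1$ in $s$ and $C^1$ in $t$. For $\cE$ I differentiate under the integral sign and invoke the metric compatibility of $\nabla$ together with the torsion-free identity $\nabla_{\partial_t}\partial_s\gamma=\nabla_{\partial_s}\partial_t\gamma$; evaluating at $t=0$ yields $\langle\partial c,\triangledown\xi\rangle_{L^2}$. For $\cW$ the chain rule gives $\tfrac{d}{dt}V(\gamma(t,s))=\langle\grad V(\gamma),\partial_t\gamma\rangle$, and integration over $s$ yields $\langle\grad V\circ c,\xi\rangle_{L^2}$ at $t=0$. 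Differentiating $e^{\pm\tau}$ contributes the $\sigma$-term $-\bigl(e^{-\tau}\cE(c)+e^{\tau}\cW(c)\bigr)\sigma$.

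To upgrade these G\^ateaux expressions to a Fr\'echet derivative and verify continuity, I check that $(\xi,\sigma)\mapsto d\A(c,\tau)(\xi,\sigma)$ is a bounded linear functional on $T_c\ls\times\R$ and that it depends continuously on $(c,\tau)$. Boundedness of the kinetic part follows from Cauchy--Schwarz in $L^2(c^*TM)$ and the definition of the $H^1$-norm; boundedness of the potential part uses the continuous inclusion $\ls\hookrightarrow C^0(\S^1,M)$, so that $\grad V\circ c$ is continuous on $\S^1$ hence in $L^2(c^*TM)$. Continuity in $(c,\tau)$ then reduces to continuity of the composition $c\mapsto \grad V\circ c$, of the weak derivative $c\mapsto\partial c$ into $L^2$, and of the $L^2$ inner product, all of which are standard.

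The main obstacle is a technical one: justifying the interchange of $\partial_t$ and integration, and comparing tangent spaces at nearby loops on the Hilbert manifold $\ls$. The clean way is to work in an exponential chart centred at $c$, in which $T_c\ls\cong H^1(c^*TM)$ and the Christoffel symbols vanish along $c$, so that $\triangledown$ reduces to ordinary differentiation at $c$ itself; one then verifies the local coordinate formula, checks that it transforms correctly, and recovers the coordinate-free expression stated in the lemma. This is precisely the framework developed in Klingenberg \cite{Klingenberg_Riemannian}, so the argument ultimately amounts to assembling the pieces in that language.
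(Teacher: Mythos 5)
Your proposal is correct and follows essentially the same route as the paper: split $\A(c,\tau)=e^{-\tau}\cE(c)-e^{\tau}\cW(c)$, use the first-variation computations of $d\cE$ and $d\cW$ (differentiating under the integral with the symmetry and metric compatibility of the connection), apply the product rule in $\tau$, and delegate the chart-level Fr\'echet/continuity verifications to Klingenberg. The extra detail you supply on boundedness and continuity of the differential is exactly what the paper leaves implicit via the reference to \cite{Klingenberg_Riemannian}.
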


The functional $\A$ does not satisfy the Palais-Smale condition. We therefore approximate this functional by functionals $\A_\epsilon$, and show that the approximating functionals do satisfy PS. We then find critical points of the approximating functionals using a linking argument. Finally we show that these critical points converge to a critical point of $\A$ as $\epsilon\rightarrow 0$. The approximating, or penalized, functionals are defined by

\begin{equation*}
\A_\epsilon(c,\tau)=\A(c,\tau)+\epsilon(e^{-\tau}+e^{\tau/2}).
\end{equation*}

The term $\epsilon e^{-\tau}$ penalizes orbits with short periods, and $\epsilon e^{\tau/2}$ penalizes orbits with long periods. Recall that,. for $\epsilon>0$ fixed,  a sequence $\{(c_n, \tau_n)\} \in \Lambda M\times\mR$ is called  a \emph{Palais-Smale sequence} for 
$\A_\epsilon$, if:
\begin{itemize}
\item[(i)]    there exist constants $a_1,a_2>0$ such that $a_1\leq\A_\epsilon(c_n, \tau_n)\leq a_2$;
\item[(ii)] $\Vert d \A_\epsilon(c_n, \tau_n)\Vert \to 0$ as
$n$ tends to $\infty$. 
\end{itemize}

The metric in Condition (ii) is the dual Riemannian metric on $T^*\Lambda M\times \mR$. This can be equivalently rewritten as 
\begin{equation}
\label{e:almostcrit}
d\A_\epsilon(c_n, \tau_n)(\xi, \sigma)=
\langle \grad\A_\epsilon(c_n, \tau_n), (\xi,\sigma)\rangle_{H^1\times\mR}=
o(1)(\|\xi\|_{H^1}+|\sigma|),
\end{equation}
as $n\to \infty$ and $(\xi,\sigma) \in T_{(c_n,\tau_n)}\Lambda M\times\mR$. Condition (i) implies that, by passing to a subsequence if necessary, 
$\A_\epsilon(c_n, \tau_n)\to a_\epsilon$, with $a_1\leq a_\epsilon\leq a_2$. In what follows we tacitly assume we have passed to such a subsequence. 
\begin{remark}
We will only consider Palais-Smale sequences that are positive, thus $a_1>0$. The functionals $\A_\epsilon$ satisfy the Palais-Smale condition for critical levels $a_\epsilon>a_1>0$.
\end{remark}
The relation between $\A$ and $\A_\epsilon$ gives:
$$
d\A_\epsilon (c_n, \tau_n)(\xi, \sigma)= d \A(c_n, \tau_n)(\xi, \sigma) - \epsilon \Bigl( e^{-\tau_n} - \frac{1}{2} e^{\tau_n/2}\Bigr)\sigma.
$$
Proceeding as in Lemma 7 of~\cite{bpv} we get the following estimates for Palais Smale sequences. 
\begin{align}{2}
 & e^{-\tau_n} \cE(c_n) + \epsilon \bigl(2 e^{-\tau_n} +\frac{1}{2}
 e^{\tau_n/2}\bigr) =a_\epsilon + o(1),
  \label{e:es1}\\
  & e^{\tau_n} \cW(c_n) - \epsilon\frac{3}{4}e^{\tau_n/2} 
   = -\frac{a_\epsilon}{2} +o(1),\qquad \hbox{as~~~~} n \to\infty.
  \label{e:es1b}
\end{align}
From these estimates, we get a priory bounds on $\tau_n$ as in Lemma 8 of~\cite{bpv} and combining this with estimate~\bref{e:es1} for the kinetic energy, we also get a bound on the kinetic energy.

\begin{lemma}

\label{lem:est}
Let $(c_n,\tau_n)$ be a Palais-Smale sequence. There are constants $T_0 < T_1$ and $C$ (depending on $\epsilon$) such that $T_0 \le \tau_n\le T_1$ and $\Vert c'_n\Vert_{L^2}^2 = 2\cE(c_n) \le C$.
\end{lemma}

The following proposition establishes the Palais-Smale condition for the action $\A_\epsilon$, with $\epsilon>0$.
\begin{proposition}
\label{lem:PS}
Let $(c_n,\tau_n)$ be a Palais-Smale sequence for $\A_\epsilon$, $\epsilon>0$. Then
$(c_n,\tau_n)$ has an accumulation point $(\ce,\te) \in \Lambda M \times \mR$ that is a critical point, i.e. 
 $d\A_\epsilon(\ce, \te)=0$ and the action is bounded $0<a_1 \le \A_\epsilon(\ce,\te) = a_\epsilon \le a_2$.
\end{proposition}

\begin{proof}
From Lemma~\ref{lem:est}
 we have that $\cE(c_n) \le C$ and $|\tau_n|\le C'$,
with the constants $C,C'>0$ depending only on $\epsilon$. Fix $s_0 \in \S^1$, then by Eq.\ (\ref{eqn;metric-1}) we have
$d_M(c_n(s),c_n(s_0)) \le \sqrt{|s-s_0|} \sqrt{2C} \le \sqrt{2C}$, and therefore $c_n(s) \in B_{\sqrt{2C}}(c_n(s_0))$, for all $s\in \S^1$ and all  $n$. Since $K\subset M$ is compact, its diameter is finite. It follows, that if $d_M(c_n(s_0),K) \to \infty$ that  there exists an $N$ such that $c_n(s) \in M\setminus K$, for all $s\in \S^1$ and all $n\ge N$. The argument that it is impossible that $d_M(c_n(s_0),K)\rightarrow \infty$ follows from asymptotic regularity and is identical to the proof of Lemma~9 of~\cite{bpv}.

Thus $d_M(c_n(s_0),K) \le C''$ and therefore there exists an $0<R<\infty$ such that $c_n(s) \in B_R(K)$ for all $s\in \S^1$ and all $n\ge N$. Since $(M,g)$ is complete, the Hopf-Rinow Theorem implies that $B_R(K)$ is compact and thus $\{c_n(s)\} \subset M$ is pre-compact for any fixed $s\in \S^1$. The sequence $\{c_n(s)\}$ is point wise relatively compact and  equicontinuous by Eq.\  (\ref{eqn;metric-1}). Therefore, by the generalized version of the Arzela-Ascoli Theorem \cite{munkres} there exists a subsequence $c_{n_k}$ converging in $C^0(\S^1,M)$ (uniformly) to a continuous limit $c_\epsilon \in C^0(\S^1,M)$. It remains to show that $c_\epsilon$ is an accumulation point in $\Lambda M$, thus in $H^1$ sense.

Due to the above convergence in $C^0(\S^1,M)$, the sequence $\{c_n\}$ can be assumed to be contained  in a
fixed chart $\bigl(\cU(c_0),\exp_{c_0}^{-1}\bigr)$, for a fixed $c_0 \in C^\infty(\S^1,M)$. Following \cite{Klingenberg}
it suffices to show that $\exp_{c_0}^{-1} c_n$ is a Cauchy sequence in $T_{c_0}\Lambda M = H^1(c_0^*TM)$.
This final technical argument is identical to Theorem 1.4.7 in \cite{Klingenberg}, which proves that $\{c_n\}$ has an accumulation point in $(c_\epsilon,\tau_\epsilon) \in\Lambda M\times \mR$, proving the Palais-Smale condition for $\A_\epsilon$. The limit points satisfy 
$d\A_\epsilon(c_\epsilon,\tau_\epsilon) = 0$, and $\A_\epsilon(c_\epsilon,\tau_\epsilon)=a_\epsilon$.
 \end{proof}

For critical points of $\A_\epsilon$ we prove additional  a priori estimates on   $\tau_\epsilon$.
The latter imply a priori estimates on $c_\epsilon$.
This allows us to pass to the limit as $\epsilon \to 0$.  

We start with pointing out that critical points of the penalized action $\A_\epsilon$ satisfy the following
Hamiltonian identity
\begin{equation}
\label{eqn:ham-1}
\frac{e^{-2\te}}{2} |\ce'(s)|^2 +  V(\ce(s))\equiv  \epsilon\Bigl( -e^{-2\te} +  \frac{1}{2} e^{-\te/2}\Bigr) = \tilde \epsilon.
\end{equation}
Thus the critical point $(\ce,\te)$ corresponds to a periodic orbits on $\Sigma_{\tilde\epsilon}$. Via the transformation $q_\epsilon(t) = \ce(t e^{-\tau})$ and the Legendre transform of $(q_\epsilon,q'_\epsilon)$ to a  curve $\gamma_\epsilon$ on the cotangent bundle we see that the Hamiltonian action is
$$
\A^H_\epsilon(\gamma_\epsilon,\te) = \int_{\gamma_\epsilon} \Lambda + \epsilon\bigl(e^{-\te} + e^{\te/2}\bigr),
$$
where $\Lambda$ is the tautological 1-form on $T^*M$.

A regular hypersurface of a mechanical Hamiltonian, wether it is compact or not, is always is of contact type, cf.~\cite{Rot:ww}. 
In the case that $\Sigma$ is defined for a Hamiltonian with $V$ asymptotically regular, an explicit contact form can be constructed and a stronger contact type condition holds. Consider the vector field:
\begin{equation}
   v(q) =- \frac{\grad V(q)}{1+\left|\grad V(q)\right|^2},
\label{e:choice2}
\end{equation}
and the function $f:T^*M \to \mR$ defined by $f(x) = \theta_q(v(q))$, for all $x=(q,\theta_q) \in T^*M$. For $\kappa>0$, define the 1-form 
$\Theta = \Lambda + \kappa df$ . Clearly, $d\Theta = \Omega$, the standard symplectic form on the cotangent bundle. Define the energy surfaces $\Sigma_\epsilon = \{x\in T^*M~|~H(x) = \epsilon\}$.

\begin{proposition}
\label{prop:second-contact}
Let $V$ be asymptotically regular. Then there exists  $\epsilon_0,\kappa_0>0$, such that for every $-\epsilon_0<\epsilon< \epsilon_0$, $\Theta = \Lambda + \kappa df$ restricts to a contact form on $\Sigma_\epsilon$, for all $0<\kappa\le \kappa_0$. Moreover, for every $\kappa$, there exists a constant $a_\kappa>0$ such that
$$
\Theta(X_H) \ge a_\kappa >0,\quad\hbox{for all}~~x\in \Sigma_\epsilon\quad\hbox{and for all}\quad~-\epsilon_0<\epsilon
< \epsilon_0.
$$ 
The energy surfaces $\Sigma_\epsilon$ are said to be of uniform contact type. 
\end{proposition}

\begin{proof}
A tedious, but straightforward computation 
reveals that\footnote{Given a metric, the Hessian of a function is the bilinear form on the tangent bundle defined by $\hes V(q) (X_q,Y_q)=\langle \nabla_X \grad V, Y\rangle(q)$, where $X$ and $Y$ are vector field extensions of $X_q,Y_q$. Via the musical isomorphisms the Hessian also induces a bilinear form on the cotangent bundle and a pairing between the tangent and cotangent bundle, which we all denote with the same symbol.}
\begin{equation}
\label{eq:tedious}
\begin{split}
X_H(f)(q,\theta_q)=\frac{|\grad V(q)|^2}{1+|\grad V(q)|^2}&-\frac{\hes V(q)(\theta_q,\theta_q)}{1+|\grad V(q)|^2}\\  &+\frac{2\theta_q(\grad V(q))\hes V(q)(\grad V, \theta_q)}{(1+|\grad V(q)|^2)^2}.
\end{split}
\end{equation}
The reverse triangle inequality, and Cauchy-Schwarz directly give
\begin{align*}
X_H(f)
&\geq \frac{|\grad V(q)|^2}{1+|\grad V(q)|^2}-\frac{3\Vert \hes V(q)\Vert\,|\theta_q|^2}{1+|\grad V(q)|^2}.
\end{align*}
By asymptotic regularity there exists a constant $C$ such that $\frac{3\Vert \hes V(q)\Vert}{1+|\grad V(q)|^2}\leq C$, hence
\begin{equation*}
X_H(f)\geq \frac{|\grad V(q)|^2}{1+|\grad V(q)|^2}-C\,|\theta_q|^2.
\end{equation*}
This yields the following global estimate
$$
\Theta_x(X_H)(q,\theta_q) \ge \bigl( 1-\kappa C\bigr) |\theta_q|^2 + \kappa \frac{|\grad V(q)|^2}{1+|\grad V(q)|^2}>0\\
\quad \hbox{for all}~~ x\in T^*M,
$$
for all $0<\kappa\le  \kappa_0 = 1/2C$. The final step is to establish a uniform positive  lower bound on $a_\kappa$ for $(q,\theta_q)\in \Sigma_\epsilon$, independent of $(q,\theta_q)$ and $\epsilon$.

If $d_M(q,K)\geq R$ is sufficiently large, then asymptotic regularity gives that $|\grad V(q)|>\asympconstant$. Thus, in this region,
$$
 \frac{1}{2} |\theta_q|^2 + \kappa \frac{|\grad V(q)|^2}{1+|\grad V(q)|^2}\geq  \kappa \frac{|\grad V(q)|^2}{1+|\grad V(q)|^2}\geq \frac{\kappa \asympconstant^2}{1+\asympconstant^2}.
$$
On $d_M(q,K)< R$ we can use standard compactness arguments. For $(q,\theta_q)\in \Sigma_\epsilon$, we have the energy identity $ \frac{1}{2}|\theta_q|^2+V(q)=\epsilon.$ Suppose that $\frac{1}{2}|\theta_q|^2<\epsilon_0$, then $|V(q)|<\epsilon+\epsilon_0$. If $\epsilon_0$ is sufficiently small, this implies that $|\grad V(q)|\geq V_0>0$ for some constant $V_0$, because $\grad V\not=0$ at $V(q)=0$. Therefore in this case
$$
 \frac{1}{2} |\theta_q|^2 + \kappa \frac{|\grad V(q)|^2}{1+|\grad V(q)|^2}>\frac{\kappa V_0^2}{1+V_0^2}.
$$
If $|\theta_q|^2\geq \epsilon_0$, then $\epsilon_0$ is a lower bound of this quantity. We have exhausted all possibilities and established a uniform lower bound on $\Theta(X_H)$.
\end{proof}

The following a priori bounds are due to the uniform contact type of $\Sigma$.
\begin{lemma}
Let $(c_\epsilon,\tau_\epsilon)$ be critical points of $\A_\epsilon$, with 
$0< a_1 \leq \A_\epsilon(\ce,\te) \leq a_2$. Then there is a constant $T_2$, independent of $\epsilon$,
such that $\tau_\epsilon \leq T_2$ for sufficiently small $\epsilon$. 
\label{l:teupperbound}
\end{lemma}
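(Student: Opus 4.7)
The plan is to bracket the Liouville integral $\int_{\gamma_\epsilon}\Lambda$ on the closed Hamiltonian orbit $\gamma_\epsilon$ corresponding to $(c_\epsilon,\tau_\epsilon)$: an upper bound by $a_2$ coming from the action bound and the energy identity \eqref{eqn:ham-1}, and a lower bound linear in the period $T_\epsilon = e^{\tau_\epsilon}$ coming from the uniform contact type provided by Proposition~\ref{prop:second-contact}. Forcing the two bounds to be compatible will yield the required upper bound on $\tau_\epsilon$.

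First I would identify $(c_\epsilon,\tau_\epsilon)$ with a closed Hamiltonian orbit $\gamma_\epsilon$ of period $T_\epsilon = e^{\tau_\epsilon}$ on the level set $\Sigma_{\tilde\epsilon}$, via the time rescaling $q_\epsilon(t) = c_\epsilon(te^{-\tau_\epsilon})$ and the Legendre transform $p_\epsilon = \flat q'_\epsilon$. A direct change of variables gives $\int_{\gamma_\epsilon}\Lambda = 2e^{-\tau_\epsilon}\cE(c_\epsilon)$. Integrating the pointwise energy identity \eqref{eqn:ham-1} over $[0,1]$ yields $e^{-2\tau_\epsilon}\cE(c_\epsilon) + \cW(c_\epsilon) = \tilde\epsilon$, and using this to eliminate $\cW$ from $\A_\epsilon(c_\epsilon,\tau_\epsilon) = a_\epsilon$ gives
\[
\int_{\gamma_\epsilon}\Lambda \;=\; a_\epsilon - \epsilon\bigl(2e^{-\tau_\epsilon} + \tfrac{1}{2}e^{\tau_\epsilon/2}\bigr) \;\le\; a_\epsilon \;\le\; a_2.
\]

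Next I would apply Proposition~\ref{prop:second-contact} on $\Sigma_{\tilde\epsilon}$, which requires $|\tilde\epsilon| < \epsilon_0$. If $\tau_\epsilon \leq 0$ the lemma holds trivially with $T_2 = 0$, so I may assume $\tau_\epsilon > 0$. In that regime $e^{-2\tau_\epsilon},\,e^{-\tau_\epsilon/2} < 1$, so \eqref{eqn:ham-1} gives $|\tilde\epsilon| \leq \tfrac{3\epsilon}{2}$, which lies in $(-\epsilon_0,\epsilon_0)$ once $\epsilon$ is small enough. The explicit contact form $\Theta = \Lambda + \kappa\,df$ then satisfies $\Theta(X_H) \geq a_\kappa > 0$ uniformly on $\Sigma_{\tilde\epsilon}$. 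Since $\gamma_\epsilon$ is a closed loop, $\int_{\gamma_\epsilon} df = 0$, and hence
\[
\int_{\gamma_\epsilon}\Lambda \;=\; \int_{\gamma_\epsilon}\Theta \;=\; \int_0^{T_\epsilon}\Theta\bigl(X_H\circ\gamma_\epsilon\bigr)\,dt \;\ge\; a_\kappa\,e^{\tau_\epsilon}.
\]
Combining this with the upper bound gives $a_\kappa\,e^{\tau_\epsilon} \leq a_2$, i.e.\ $\tau_\epsilon \leq T_2 := \log(a_2/a_\kappa)$, independent of $\epsilon$.

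The only real technical obstacle is ensuring we remain in the range $|\tilde\epsilon| < \epsilon_0$ where Proposition~\ref{prop:second-contact} applies: the expression for $\tilde\epsilon$ in \eqref{eqn:ham-1} could a priori blow up as $\tau_\epsilon \to -\infty$ via the $\epsilon\,e^{-2\tau_\epsilon}$ term. The case split on the sign of $\tau_\epsilon$ sidesteps this cleanly, since in the non-positive regime the conclusion is immediate, and in the positive regime both exponentials entering $\tilde\epsilon$ are bounded by $1$, so $|\tilde\epsilon| = O(\epsilon)$.
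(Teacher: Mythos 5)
Your proposal is correct and takes essentially the same route as the paper's proof: after reducing to the case $\tau_\epsilon>0$, it bounds the Liouville integral $\int_{\gamma_\epsilon}\Lambda$ from above by $a_2$ using the action bound, and from below by $a_\kappa e^{\tau_\epsilon}$ using the uniform contact type of Proposition~\ref{prop:second-contact}, yielding $\tau_\epsilon\le\max\{0,\log(a_2/a_\kappa)\}$. If anything, you are more explicit than the paper in checking that $|\tilde\epsilon|<\epsilon_0$ in the relevant regime and in deriving $\int_{\gamma_\epsilon}\Lambda\le a_\epsilon$ from the energy identity.
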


\begin{proof}

We start with the case $\te \ge 0$. The Hamiltonian action satisfies 
$$
\A^H_\epsilon(\gamma_\epsilon,\te) = \int_{\gamma_\epsilon} \Lambda + 
\epsilon\bigl(e^{-\te} + e^{\te/2}\bigr)\le a_2,
$$
and thus $ \int_{\gamma_\epsilon} \Lambda\le a_2$.
Since $\Sigma$ is of uniform contact type it holds for $\gamma_\epsilon \subset \Sigma_{\tilde \epsilon}$, with $\tilde \epsilon\le \epsilon\le \epsilon_0$, that
$$
a_2 \ge \int_{\gamma_\epsilon} \Lambda = \int_{\gamma_\epsilon} \Theta = \int_0^{e^\te} \alpha_{\gamma_\epsilon}(X_H) \ge a_\kappa e^\te.
$$
We conclude that always $\te \le \max\{0,\log(a_2/a_\kappa)\}$, which proves the lemma.
\end{proof}

We can also establish a lower bound on $\te$ under the condition that $M$ is asymptotically flat. It is here that the assumption of $M$ having flat ends is really necessary, all other 
estimates carry through under the weaker assumption of bounded geometry. 

\begin{lemma}
Let $(\ce,\te)$ be critical points of $\A_\epsilon$, with $0< a_1 \leq \A_\epsilon(\ce,\te) \leq a_2$. If $(M,g)$ is asymptotically flat, then  there is a constant $T_3$, independent of $\epsilon$, such that $\te\geq T_3$ for sufficiently small $\epsilon$. 
\label{l:telowerbound}
\end{lemma}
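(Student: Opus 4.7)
The plan is a contradiction argument: suppose there exists a sequence $\epsilon_n\to 0$ for which the critical points $(c_n,\tau_n)$ of $\A_{\epsilon_n}$ satisfy $\tau_n\to -\infty$ while $a_1\le\A_{\epsilon_n}(c_n,\tau_n)\le a_2$. The first step is to unpack this geometrically. At a critical point the identities in Lemma \ref{lem:est1} hold exactly (without $o(1)$), so \eqref{e:es1} gives $e^{-\tau_n}\cE(c_n)\le a_{\epsilon_n}\le a_2$, whence $\cE(c_n)\le a_2 e^{\tau_n}\to 0$; combined with \eqref{eqn;metric-1}, the loop $c_n$ shrinks in diameter to its basepoint $q_n:=c_n(0)$. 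The same identity forces $\epsilon_n e^{-\tau_n}\le a_2/2$, and feeding this into \eqref{e:es1b} --- rewritten as $\cW(c_n)=-\tfrac{a_{\epsilon_n}}{2}e^{-\tau_n}+\tfrac{3\epsilon_n}{4}e^{-\tau_n/2}$ --- yields $\cW(c_n)\to -\infty$, since $\epsilon_n e^{-\tau_n/2}=(\epsilon_n e^{-\tau_n})e^{\tau_n/2}\to 0$. If $q_n$ stayed in a compact set, a subsequence would converge to some $q_*\in M$, and continuity of $V$ would give $\cW(c_n)\to V(q_*)>-\infty$; hence $d_M(q_n,K)\to\infty$.

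The contradiction is then produced by testing the first variation against a parallel vector field in a flat chart. By the flat ends hypothesis and bounded geometry (Remark \ref{rem:boundedgeometry}), for all large $n$ the loop $c_n$ lies inside a geodesic ball $B(q_n,i_0)$ that is isometric to a flat Euclidean ball. Let $\xi\in T_{c_n}\LM$ be the section of $c_n^*TM$ which, in this flat chart, is identically equal to $\grad V(q_n)$; flatness and simple connectedness of the chart make $\xi$ well-defined on the closed loop, and $\triangledown\xi\equiv 0$. Plugging $(\xi,0)$ into $d\A_{\epsilon_n}(c_n,\tau_n)=0$ via Lemma \ref{lem:1stvar} kills the kinetic term (the penalty does not contribute, since it depends only on $\tau$) and leaves
\begin{equation*}
\int_0^1\bigl\langle\grad V(c_n(s)),\grad V(q_n)\bigr\rangle\,ds=0.
\end{equation*}
Separating the constant part and applying Cauchy--Schwarz with the mean-value theorem along short geodesics gives
\begin{equation*}
|\grad V(q_n)|^2\le|\grad V(q_n)|\cdot\sup_{B(q_n,\sqrt{2\cE(c_n)})}\|\hes V\|\cdot\sqrt{2\cE(c_n)}.
\end{equation*}
On that tiny ball, a Gronwall-type estimate powered by $\|\hes V\|/|\grad V|\to 0$ (asymptotic regularity) yields $|\grad V|\le 2|\grad V(q_n)|$ and hence $\|\hes V\|\le 2\,o(1)\,|\grad V(q_n)|$; dividing through by $|\grad V(q_n)|\ge\asympconstant>0$ gives $1\le 4\,o(1)\sqrt{2\cE(c_n)}\to 0$, the desired contradiction.

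The delicate point --- and the reason this particular a priori bound singles out \emph{flat}, not merely bounded-geometry, ends --- is the construction of the parallel test vector $\xi$. In a region of nonzero curvature, the holonomy of $c_n$ obstructs a globally defined section of $c_n^*TM$ with $\triangledown\xi\equiv 0$, and any curved approximation leaks into the kinetic term an error of size $e^{-\tau_n}\cE(c_n)\|R\|\,|\grad V(q_n)|$ that is too large to be absorbed into the $|\grad V(q_n)|^2$ one wants to isolate. Thus the argument genuinely relies on the first-half estimates --- both shrinking $\cE(c_n)\to 0$ \emph{and} escaping to infinity $d_M(q_n,K)\to\infty$ --- to place $c_n$ inside a flat chart before the variational step can be executed.
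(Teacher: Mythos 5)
Your proposal is correct and follows essentially the same route as the paper's proof: exact critical-point identities give $\cE(\ce)\to 0$ and force the loops to escape the compact set $K$ (the bounded case being excluded via \eqref{e:es1b}), after which bounded geometry and flat ends place the shrunken loop in a Euclidean chart where asymptotic regularity plus a Gronwall estimate yields the contradiction. Your final step---pairing $d\A_\epsilon(c_n,\tau_n)=0$ with the parallel field equal to $\grad V(q_n)$ to get $\int_0^1\langle\grad V(c_n(s)),\grad V(q_n)\rangle\,ds=0$---is just the integrated-over-one-period form of the paper's observation that $e^{2\te}\langle\grad V(\ce(s_0)),c_\epsilon'(s)\rangle$ is strictly decreasing, so the two arguments are the same computation packaged differently.
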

\begin{proof}
Assume, by contradiction that $\te\to -\infty$ as $\epsilon \to 0$. Then Equation~\bref{e:es1} gives
\begin{equation}
\label{eq:asymptoticenergy}
2\cE(\ce) = e^\te a_\epsilon - 2\epsilon   - \frac{\epsilon}{2}
  e^{3\te/2} \to 0,\quad
  \hbox{as}\quad\epsilon \to 0.
\end{equation}
Fix $s_0\in \mS^1$. Then the previous equation implies, using Equation\ \ref{eqn;metric-1}, that $\ce(s)\in B_{\epsilon'}\bigl(\ce(s_0)\bigr)$, where $\epsilon' = \sqrt{e^\te a_2 - 2\epsilon   - \frac{\epsilon}{2} e^{3\te/2}}$. We distinguish two cases: 

(i) There exists an $R>0$ such that $d_M(\ce(s_0),K)\le R$ for all $\epsilon$. Then  
$\ce(s) \in B_{\epsilon'+R}(K)$, and therefore $|V(\ce(s))|\le C$ for all $s\in \mS^1$ and all $\epsilon>0$. This implies $\int_0^1 e^\te V(\ce(s))ds \to 0$, which contradicts (\ref{e:es1b}), as $a_\epsilon>0$, and thus $\te \ge T_3$. 

(ii) Now we assume no such $R>0$ exists, and assume thus that $d_M(\ce(s_0),K) \to \infty$ as $\epsilon \to 0$ to derive a contradiction. By bounded geometry of $M$, every point $q\in M$ has a normal charts $(\cU_q,\exp_q^{-1})$ and constants $\rho_0,R_0>0$ such that $B_{\rho_0}(q)\subset \cU_q$ and $|\partial^\ell\Gamma^k_{ij}(q)|\le R_0$. This implies that $\ce(s)\in \cU_{\ce(s_0)}$ for sufficiently small $\epsilon$. We assume $M$ has flat ends, and since $d(\ce(s_0),K)\rightarrow \infty$ the metric on the charts $\cU_{\ce(s_0)}$ is flat. We identify these charts with open subsets of $\mR^n$ henceforth. The differential equation $\ce$ satisfies is
\begin{equation}
\label{eq:ce}
e^{-2\te} \nabla_s c'_\epsilon(s)+\grad V(\ce(s))=0.
\end{equation}
Take the unique geodesic $\gamma$ from $\ce(s_0)$ to $\ce(s)$ parameterized by arc length, i.e.
$$\gamma(0)=\ce(s_0),\quad  \gamma(d_M(\ce(s_0),\ce(s))=\ce(s),\quad\text{and}\quad |\gamma'(t)|=1.$$ 
Then, by asymptotic regularity, $\Vert \hes V(\gamma(t))\Vert\leq C|\grad V(\gamma(t))|$ for some constant $C>0$, and
\begin{equation*}
\begin{split}
\frac{d}{dt}|\grad V(\gamma(t))|^2&=2\, \hes V(\gamma(t))(\grad V(\gamma(t)),\gamma'(t))\\
&\leq 2 \Vert \hes V(\gamma(t))\Vert |\grad V(\gamma(t))| \leq 2 C|\grad V(\gamma(t))|^2.
\end{split}
\end{equation*}
Gronwall's inequality therefore implies that 
\begin{equation}
\label{eq:gronwall}
|\grad V(\gamma(t))|\leq |\grad V(\gamma(0))|e^{C t}.
\end{equation} 
We identify $U_{\ce(s_0)}$ with an open subset of $\mR^n$, and we write $
\grad V(\gamma(t))=\grad V(\gamma(0))+\int_0^t \frac{d}{d\sigma}\grad V(\gamma(\sigma))d\sigma.
$
Hence
\begin{align}
\nonumber|\grad V(\gamma(t))-\grad V(\gamma(0))|&\leq \int_0^t\Vert\hes V(\gamma(\sigma))\Vert d\sigma\leq C\int_0^t|\grad V(\gamma(\sigma))|d\sigma\\
\label{eq:ceestimate}
&\leq |\grad V(\gamma(0))|(e^{Ct}-1).
\end{align}
For any solution to Equation~\bref{eq:ce}, we compute
\begin{equation*}
\begin{split}
\frac{d}{ds}e^{2\te}\langle \grad V(\ce(s_0)),c'_\epsilon(s)\rangle=&e^{2\te}\langle \grad V(\ce(s_0)),\nabla_s c'_\epsilon(s)\rangle\\
=&-\langle \grad V(\ce(s_0)),\grad V(\ce(s))\rangle \\
=&-\langle \grad V(\ce(s_0)),\grad V(\ce(s_0))\rangle\\
&\,-\langle \grad V(\ce(s_0)),\grad V(\ce(s))-\grad V(\ce(s_0))\rangle
\end{split}
\end{equation*}
By asymptotic regularity and Estimate~\bref{eq:ceestimate}, we find that
$$
\frac{d}{ds}e^{2\te}\langle \grad V(\ce(s_0),c'_\epsilon(s)\rangle\leq-\asympconstant^2+\asympconstant^2(e^{C d_M(\ce(s_0),\ce(s))}-1).
$$
We see that as $\epsilon\rightarrow 0$ that $e^{2\te}\langle \grad V(\ce(s_0),c'_\epsilon(s)\rangle$ is monotonically decreasing in $s$. We conclude that $\ce$ cannot be periodic. This is a contradiction, therefore there exists a constant $T_3$ such that $\te\geq T_3$, for sufficiently small $\epsilon$.
\end{proof}

\begin{proposition}
\label{prop:pen}
Let $(\ce,\te)$, $\epsilon \to 0$ be a sequence satisfying $d\A_\epsilon(\ce,\te) =0$, and $0<a_1\le \A_\epsilon(\ce,\te) \le a_2$. If $(M,g)$ has flat ends then there exists a convergent subsequence $(c_{\epsilon'},\tau_{\epsilon'}) \to (c,\tau)$ in $\Lambda M\times\mR$, $\epsilon'\to 0$. The limit satisfies $d\A(c,\tau)=0$, and $0<a_1\leq\A(c,\tau)\leq a_2$.
\end{proposition}

\begin{proof}
From Lemmas \ref{l:teupperbound} and \ref{l:telowerbound} we obtain uniform bounds on $\te$. We
can now repeat the arguments of the proof of Proposition \ref{lem:PS} on the sequence $\{\ce\}$, from which we draw the desired conclusion.
\end{proof}

\section{The relation of the topology of the hypersurface with the topology of its projection}
\label{sec:hyp}

We investigate the relation between the topology of $\hs$ and its projection $N=\pi(\Sigma)$ to the base manifold. Recall that we assume $H$ to be mechanical and that the hypersurface $\hs=H^{-1}(0)$ is regular. Thus $N$ and its boundary $\partial N$ are given by
$$
N=\{q\in M~|~V(q)\leq 0\},\quad\text{and}\quad\partial N=\{q\in M~|~V(q)= 0\},
$$
and $\partial N$ is smooth. We have the topological characterization
\begin{equation}
\hs\cong ST^*N\bigcup_{ST^*N\bigr|_{\partial N}} DT^*N\bigr|_{\partial N}.
\end{equation}
The characterization is given in terms of the sphere bundle $ST^*N$ and the disc bundle $DT^*N$ in the cotangent bundle of $N$. The vertical bars denote the restriction of the bundles to the boundary. This topological characterization gives a relation between the homology of $\hs$ and $N$. In this section we identify $\Sigma$ with this characterization.

Recall that a map is proper if preimages of compact sets are compact. In the proof of the next proposition, compactly supported cohomology $H_c^*(M)$ is used, which is contravariant with respect to proper maps. In singular (co)homology, homotopic maps induce the same maps in (co)homology. For compactly supported cohomology, maps that are homotopic via a homotopy of proper maps, induce the same maps in cohomology. If $\partial N=\emptyset$ the following proposition directly follows from the Gysin sequence.
\begin{proposition}
\label{prop:homologyrelation}
There exist isomorphisms $H^i_c (\hs)\cong H_c^i(N)$ for all $0\leq i \leq n-2$. 
\end{proposition}
\begin{proof}
Let $C$ be the closure of a collar of $\partial N$ in $N$. Thus $C$ deformation retracts via a proper homotopy onto $\partial N$. Denote by $\pi$ also the projection $ST*M\rightarrow M$. Then $\pi^{-1}(C)$ is the closure of a collar of $ST^*N\bigr|_{\partial N}=\partial ST^*N$ in $ST^*N$, and therefore it deformation retracts via a proper homotopy onto $ST^*N\bigr|_{\partial N}$. Define $D\subset \Sigma$ by
\begin{equation*}
D=\pi^{-1}(C)\bigcup_{ST^*N\bigr|_{\partial N}}DT^*N\bigr|_{\partial N}.
\end{equation*}
This is a slight enlargement of the disc bundle of $M$ restricted to the boundary $\partial N$, which Figure~\ref{fig:bundles} clarifies. By construction $D$ deformation retracts properly to $DT^*N\bigr|_{\partial N}$, which in turn deformation retracts properly to $\partial N$. This induces an isomorphism
\begin{equation}
\label{eq:easyiso}
H_c^*(D)\cong H_c^*(\partial N).
\end{equation}
Let $S=ST^*N$. The intersection $D\cap S$ deformation retracts properly to $ST^*N\bigr|_{\partial N}$. Thus the isomorphism
\begin{equation}
\label{eq:easyiso2}
H^*_c(D\cap S)\cong H^*_c(ST^*N\bigr|_{\partial N}),
\end{equation}
holds. The inclusions in the diagram
\begin{equation*}
\xymatrix@R=.2cm{
&S\ar[rd]^{\jmath_1}&\\
S\cap D\ar[ru]^{\imath_1}\ar[rd]_{\imath_2}&&\Sigma\\
&D\ar[ru]_{\jmath_2}&
}
\end{equation*}
are proper maps, because the domains are all closed subspaces of the codomains. This gives rise to the contravariant Mayer-Vietoris sequence of compactly supported cohomology of the triad $(\hs, S,D)$ 
\begin{equation}
\label{eq:mayervietorisN}
\xymatrix@=2.1pc{\ar[r]&
H_c^i(\hs)\ar[r]^-{(\jmath_1^i,-\jmath_2^i)}&H_c^i(S)\oplus H_c^i(D)\ar[r]^-{\imath_1^i+\imath_2^i}&H_c^i(S\cap D)\ar[r]&H^{i+1}_c(\hs)\ar[r]&\\
}
\end{equation}
The map $\imath_2^i$ is an isomorphism: this can be seen from the Gysin sequence for compactly supported cohomology as follows. Recall that from any vector bundle $E\rightarrow B$ of rank $n$ over a locally compact space $B$, we can construct a sphere bundle $SE\rightarrow B$. The Gysin sequence relates the cohomology of $SE$ and $B$,
\begin{equation}
\label{eq:ggysin}
\xymatrix{
\ldots\ar[r]&H^{i-n}_c(B)\ar[r]^-{\epsilon^i}&H^i_c(B)\ar[r]^{\pi^i}&H^i_c(SE)\ar[r]^-{\delta}&H^{i-n+1}_c(B)\ar[r]&\ldots
}
\end{equation}
The map $\epsilon^i$ is the cup product with the Euler class of the sphere bundle. We apply this sequence to the sphere bundle in $T^*N$ restricted to $\partial N$. For dimensional reasons the sequence breaks down into short exact sequences
\begin{equation}
\label{eq:gysindN}
\xymatrix{
0\ar[r] &H_c^i(\partial N)\ar[r]^-{\pi^i}&H_c^i(ST^*N\bigr|_{\partial N})\ar[r]&0\qquad \text{for }\qquad 0\leq i \leq n-2.
}
\end{equation}
The diagram
\begin{equation*}
\xymatrix{
H^i_c(D)\ar[r]^-{\imath_2^i}\ar[d]_{\cong}&H_c^i(S\cap D)\ar[d]^{\cong}\\
H^i_c(\partial N)\ar[r]^-{\pi^i}&H^i_c\left(ST^*N\bigr|_{\partial N}\right)
}
\end{equation*}
commutes. This shows that $\imath_2^i$ is an isomorphism for $0\leq i\leq n-2$. The map $\imath_1^i+\imath_2^i$ in the Mayer-Vietoris sequence, Equation~\bref{eq:mayervietorisN}, is surjective, and the sequence breaks down into short exact sequences
\begin{equation}
\label{eq:shortN}
\xymatrix{0\ar[r]&H_c^i(\hs)\ar[r]&H_c^i(S)\oplus H_c^i(D)\ar@<3pt>[r]^-{\imath_1^i+\imath_2^i}&H_c^i(S\cap D)\ar@<3pt>[l]^-{p}\ar[r]&0}.
\end{equation}
More is true, since the sequence actually splits by the map $p=(0,(\imath_2^i)^{-1})$. If we study the Gysin sequence for $N$ and $S$ we see that
\begin{equation}
\label{eq:gysinN}
\xymatrix{
0\ar[r]&H^i_c(N)\ar[r]^-{\pi^i}&H_c^i(S)\ar[r]&0,
}\quad\text{for}\quad 0\leq i\leq n-2.
\end{equation}
The isomorphisms~\bref{eq:gysinN},~\bref{eq:easyiso}, \bref{eq:easyiso2}, and~\bref{eq:gysindN} can be applied to the sequence in Equation~\bref{eq:shortN} 
and because the sequence is split the stated isomorphism holds.
\end{proof}
\begin{figure}
\def\svgwidth{.6\textwidth}
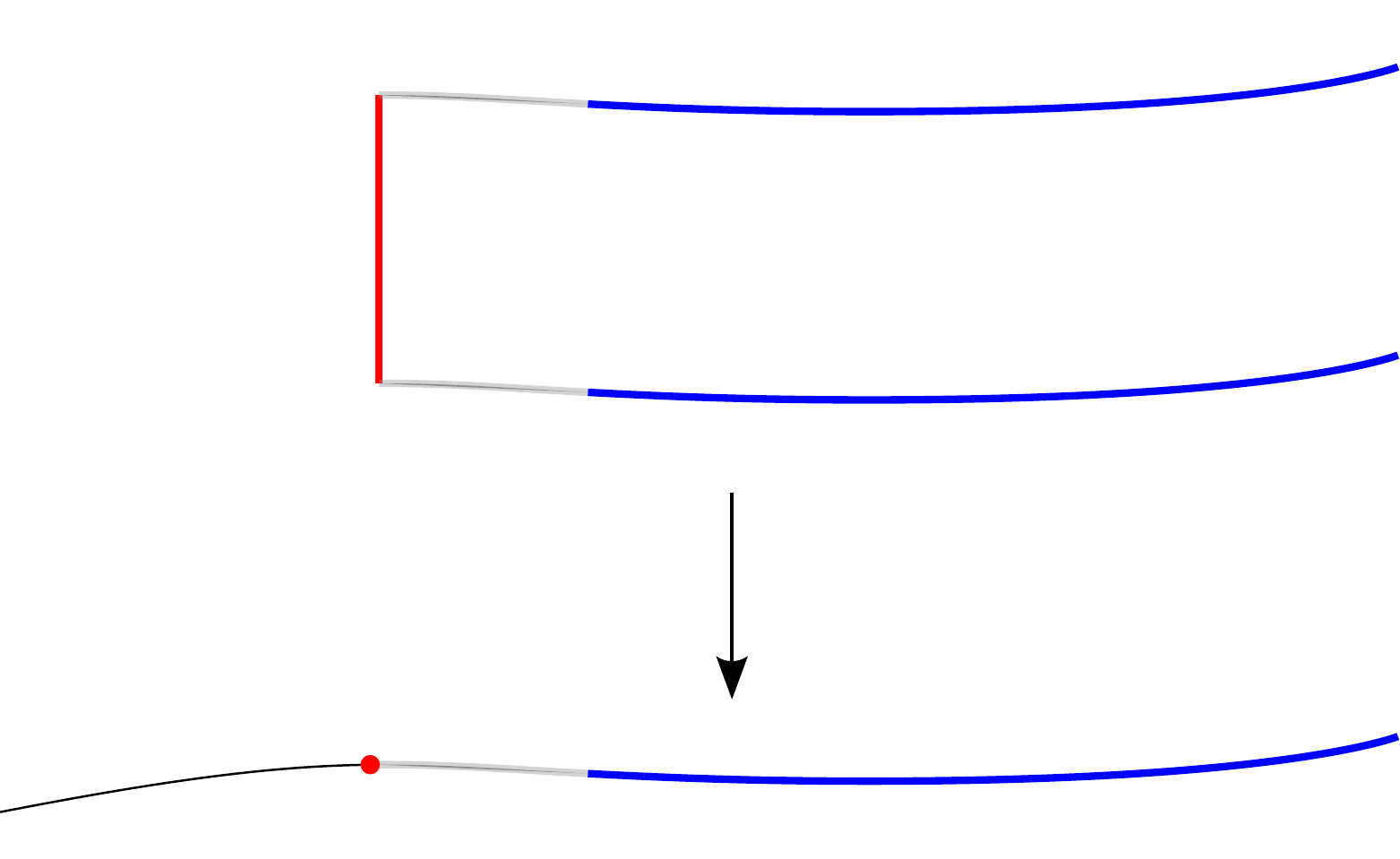
\caption{A sketch of the spaces $D$, and $S$. In the picture $N$ is a half-line, hence $\partial N$ is a point. The topology of the energy hypersurface can be recovered from its projection $N$.
}
\label{fig:bundles}
\end{figure}
\begin{proposition}
\label{prop:duality}
For all $2\leq i\leq n$ there is an isomorphism
\begin{equation}
H_{i}(N,\partial N)\cong H_{i+n-1}(\hs).
\end{equation}
\end{proposition}
\begin{proof}
This is a double application of Poincar\'e duality for non-compact manifolds with boundary. 
The dimension of $N$ is $n$, and therefore Poincar\'e duality gives $H_{i}(N,\partial N)\cong H_c^{n-i}(N)$. The boundary of $\hs$ is empty, and its dimension equals $2 n-1$, thus $H_{n+i-1}(\hs)\cong H^{n-i}_c(\hs)$. By Proposition~\ref{prop:homologyrelation} we have $H^{n-i}_c(N)\cong H^{n-i}_c(\hs)$, for all $2\leq i\leq n$. The isomorphism stated in the proposition is the composition of the isomorphisms.
\end{proof}

We would also like the previous proposition to be true if $i=1$. This is the case if the bundle $ST^*N$ is trivial, but in general this is not true. However, the following result is sufficient for our needs.

\begin{proposition}
\label{prop:exceptional}
If $H_{n}(\Sigma)\not=0$ and $H_{n}(M)=0$, then $H_1(N,\partial N)\not=0$.
\end{proposition}
\begin{proof}
We will show that a non-zero element in $H_c^{n-1}(\Sigma)$ gives rise to a non-zero element in $H_c^{n-1}(N)$. A double application of Poincar\'e duality, as in the previous proposition, will give the desired result. We will use the same notation as in the proof of Proposition~\ref{prop:homologyrelation}. The Gysin sequence, Equation\ \bref{eq:ggysin}, for the sphere bundle $ST^*N\bigr|_{\partial N}$ over $\partial N$ breaks down to the short exact sequence
\begin{equation}
\label{eq:gysinnn}
\xymatrix{0\ar[r]&H_c^{n-1}(\partial N)\ar[r]^-{\pi^{n-1}}&H_c^{n-1}(ST^*N\bigr|_{\partial N})\ar[r]^-{\delta}&H_c^0(\partial N)\ar[r]&0}.
\end{equation}
Because $ST^*N\bigr|_{\partial N}$ is an $(n-1)$-dimensional sphere bundle over an $(n-1)$-dimensional manifold, it admits a section $\sigma:\partial N\rightarrow ST^*N\bigr|_{\partial N}$ and Equation\ \bref{eq:gysinnn} splits. We obtain the isomorphism
$$
H_c^{n-1}(S\cap D)\cong H_c^{n-1}(ST^*N\bigr|_{\partial N})\cong H_c^{n-1}(\partial N)\oplus H_c^0(\partial N).
$$
where the first isomorphism is induced by a homotopy equivalence. Now we look at the Mayer-Vietoris sequence for $S,D$, 
$$
\xymatrix@C=1.3cm{0\ar[r]&H_c^{n-1}(\Sigma)\ar[r]^-{(\jmath_1^{n-1},-\jmath_2^{n-1})}&H_c^{n-1}(S)\oplus H_c^{n-1}(D)\ar[r]^-{\imath_1^{n-1}+\imath_2^{n-1}}\ar[d]^{\cong}&H_c^{n-1}(S\cap D)\ar[d]^{(\sigma^{n-1},\delta)}\\
&&H_c^{n-1}(S)\oplus H_c^{n-1}(\partial N)\ar[r]&H_c^{n-1}(\partial N)\oplus H_c^0(\partial N).}
$$
We get a zero on the left of this sequence, because we have shown that in the previous step that the map $\imath_1^{n-2}+\imath_2^{n-2}$ is surjective, cf. the argument before Equation~\bref{eq:shortN}. We claim that $\jmath_1^{n-1}$ is injective. Suppose otherwise, then there are $\bx,\by\in H_c^{n-1}(\Sigma)$, with $\bx\not=\by$ such that $\jmath_1^{n-1}(\bx)=\jmath_1^{n-1}(\by)$. Then $j^{n-1}_1(\bx-\by)=0$. But since the map $(\jmath_1^{n-1},-\jmath_2^{n-1})$ is injective, we realize that $\jmath_2^{n-1}(\bx-\by)\not=0$. But then $\imath_2^{n-1}\jmath_2^{n-1}(\bx-\by)=0$ by the exactness of the sequence. Moreover
$$\sigma^{n-1}\imath_2^{n-1}\jmath_2^{n-1}(\bx-\by)=(\imath_2\sigma)^{n-1}\jmath_2^{n-1}(\bx-\by).$$ 
But, by the proper homotopy equivalence $D\cong \partial N$, we realize that $(\imath_2\sigma)^{n-1}:H_c^{n-1}(D)\rightarrow H_c^{n-1}(\partial N)$ is an isomorphism, and $\jmath_2^{n-1}(\bx-\by)\not=0$. This is a contradiction, hence $\jmath_1^{n-1}$ is injective.
Recall that the Gysin sequence comes from the long exact sequence of the disc and sphere bundle, and the Thom isomorphism. From this we derive the following commutative diagram, which shows a naturality property of the Gysin sequence.
$$
\xymatrix@C=.4cm{
0\ar[r]&H_c^{n-1}(\partial N)\ar[r]&H_c^{n-1}(ST^*N\bigr|_{\partial N})\ar[r]^-\delta&H_c^0(\partial N)\ar[r]&\\
0\ar[r]&H_c^{n-1}(DT^*N\bigr|_{\partial N})\ar[r]\ar[u]_{\cong}&H_c^{n-1}(ST^*N\bigr|_{\partial N})\ar[r]^-\delta\ar[u]_{=}&H_c^n(DT^*N\bigr|_{\partial N},ST^*N\bigr|_{\partial N})\ar[r]\ar[u]_{\Phi^{-1}\,\cong}&\\
0\ar[r]&H_c^{n-1}(DT^*N)\ar[r]\ar[u]&H_c^{n-1}(ST^*N)\ar[r]^-\delta\ar[u]_{\imath_1^{n-1}}&H_c^n(DT^*N,ST^*N\bigr)\ar[r]\ar[u]&\\
0\ar[r]&H_c^{n-1}(N)\ar[r]\ar[u]_\cong&H_c^{n-1}(ST^*N)\ar[r]^-\delta\ar[u]_=&H_c^0(N)\ar[r]\ar[u]_{\Phi\,\cong}&\\}
$$
The top and bottom rows are the Gysin sequences of $(\partial N,ST^*N\bigr|_{\partial N})$ and $(N,ST^*N\bigr|_{N})$ respectively. The vertical maps between the middle rows are the pullback maps of the inclusion of pairs $(DT^*N\bigr|_{\partial N},ST^*N\bigr|_{\partial N})\rightarrow (DT^*N,ST^*N\bigr|_{N})$. The vertical maps $\Phi$ are the Thom isomorphisms. The map $\imath_1^{n-1}$ in the diagram is is the same as the map induced by $\imath_1:S\cap D\rightarrow S$, under the isomorphism induced by the homotopy equivalence $S\cap D\cong ST^*N\bigr|_{\partial N}$, which we therefore denote by the same symbol. 

We want to show that $\delta \jmath_1^{n-1}(y)=0$ for all $y\in H_c^{n-1}(\Sigma)$. For this we argue as follows. Recall that $H_c^0(N)$ consists of constant functions of compact support, and therefore is generated by the number of compact components of $N$. If the vertical map in the third column, from $H_c^0(N)\rightarrow H_c^0(\partial N)$ is not injective, then $N$ has a compact component without boundary. This implies that $M$ must have a compact component without boundary. But we assume that $H_n(M)=0$, therefore $M$ does not have orientable compact components and $H_c^0(N)\rightarrow H_c^0(\partial N)$ is injective. Let $\by\in H_c^0(\Sigma)$ be non-zero. Obviously in $H_c^{n-1}(ST^*N\bigr|_{\partial N})$ we have the equality $\imath_1^{n-1}\jmath_1^{n-1}(\by)=\imath_2^{n-1}\jmath_2^{n-1}(\by)$, and from the definition of the boundary map in the long exact sequence of the pair in the second row of the diagram, we obtain
\begin{align*}
\delta \imath_1^{n-1}\jmath_1^{n-1}(\by)&=\delta \imath_2^{n-1}\jmath_2^{n-1}(\by)=[p^{-1}d(\imath_2^{n-1})^{-1}\imath_2^{n-1}\jmath_2^{n-1}{\boldsymbol y}]\\&=[p^{-1}d\jmath_2^{n-1}{\boldsymbol y}]=[p^{-1}\jmath_2^{n-1}d{\boldsymbol y}]=0.
\end{align*}
where $p$ is the projection map in the defining short exact sequence. By the injectivity of the map $H_c^0(N)\rightarrow H_c^0(\partial N)$, and the commutativity of the diagram we must have that $\delta \jmath_1^{n-1}(\by)=0\in H_c^0(N)$. The exactness of the bottom row now shows that there must be an element in $H_c^{n-1}(N)$ which is mapped to $\jmath_1^{n-1}(\by)$, because it $\jmath_1^{n-1}(\by)$ is in the kernel of $\delta$. Poincar\'e duality for non-compact manifolds with boundary states that $H_{n}(\Sigma)\cong H_c^{n-1}(\Sigma)$, and $H_c^{n-1}(N)\cong H_{1}(N,\partial N)$. Thus, by the preceding argument we get a non-zero class in $H_1(N,\partial N)$.
\end{proof}

\begin{proposition}
Suppose that $H_{k+n}(\hs)\not=0$ and $H_{k+1}(M)=0$, for some $0\leq k\leq n-1$. Then there exists a non-zero class in $H_{k}(M\setminus N)$ which is mapped to zero in $H_{k}(M)$ by the morphism induced by the inclusion.
\label{prop:nonzerohomologya}
\end{proposition}
\begin{proof}
 Consider the long exact sequence of the pair $(M,M\setminus N)$
\begin{equation*}
H_{k+1}(M)\rightarrow H_{k+1}(M,M\setminus N)\rightarrow H_k(M\setminus N)\rightarrow H_k(M).
\end{equation*}
The homology group $H_{k+1}(M)$ is zero by assumption, thus the middle map is injective. If we can find a non-zero element in $H_{k+1}(M,M\setminus N)$, then we see it is mapped to a non-zero element of $H_k(M\setminus N)$, which in turn is mapped to zero in $H_k(M)$ by exactness of the sequence. By excision $H_{k+1}(M,M\setminus N)\cong H_{k+1}(N,\partial N)$. Thus there is a non-zero element of $H_k(M\setminus N)$ which is mapped to zero in $H_k(M)$ by the inclusion for $0\leq k\leq n-1$. \end{proof}

\begin{remark}
In our setting, the assumption $H_{k+1}(M)=0$ is automatically satisfied. This follows from the assumption $H_{k+1}(\Lambda M)=0$ on the topology of the loop space, and Equation~\bref{eq:splithomology}.
\end{remark}

\section{The link}
\label{sec:link}
\subsection{The parameter $\nu$}
For analytical reasons, we need to shrink the set $N=\pi(\Sigma)=\{q\in M\,|\,V(q)\leq 0\}$ to 
\begin{equation}
\nN=\{q\in M\,|\, V(q)\leq -\nu\sqrt{1+|\grad V(q)|^2}\}.
\end{equation}
For small $\nu$ this can be done diffeomorphically. On the modified set $N_\nu$ we estimate the potential $V$ uniformly. 

\begin{lemma}
\label{lem:modifyN}
There exist $\nu>0$ sufficiently small, such that
\begin{itemize}
\item The spaces $N$ and $\nN$ are diffeomorphic, and $M\setminus N$ and $M\setminus \nN$ are diffeomorphic. 
\item If $H_{k+n}(\Sigma)\not=0$ and $H_{k+1}(M)=0$ for some $k$, there exists a non-zero class in $H_k(M\setminus \nN)$ which is mapped to zero in $H_k(M)$ by the morphism induced by the inclusion. 
\item There exists a $\rho_\nu>0$ such that, for all $q\in N_\nu$,
\begin{equation}
V(\tilde q)\leq-\frac{\nu}{2},\qquad \text{for all}\qquad \tilde q\in B_{\rho_\nu}(q).
\end{equation}
\end{itemize}
\end{lemma}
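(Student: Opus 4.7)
The plan is to realise the family $\{\partial\nN\}$ as a smooth deformation of $\partial N$ via a gradient-like flow---this yields the diffeomorphism in the first bullet, from which the homology statement in the second bullet is essentially immediate via Proposition~\ref{prop:nonzerohomologya}---and to establish the local uniform estimate in the third bullet through a Gronwall argument powered by asymptotic regularity.

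For the first bullet I would introduce $f_\nu(q) = V(q) + \nu\sqrt{1+|\grad V(q)|^2}$, so that $\nN = \{f_\nu\leq 0\}$. Since $V=0$ and $\grad V\not=0$ on $\partial N$, and $df_\nu - dV = \nu(1+|\grad V|^2)^{-1/2}\hes V(\cdot,\grad V)$ is of order $\nu$, we have $df_\nu\not=0$ in a neighborhood $U$ of $\partial N$ for all sufficiently small $\nu$. Hence $\partial\nN = \{f_\nu = 0\}$ is a smooth regular hypersurface lying in $U$. The vector field $X = -\grad V/|\grad V|^2$ on $U$ is transverse to every $\partial\nN$ and satisfies $X\cdot V \equiv -1$; after cutting off to a compactly supported vector field on $M$, its flow yields an ambient diffeomorphism $\Phi_\nu\colon M\to M$, depending smoothly on $\nu$ with $\Phi_0 = \id$, which carries $\partial N$ onto $\partial\nN$ and consequently $N$ onto $\nN$ and $M\setminus N$ onto $M\setminus\nN$. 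For the second bullet, the hypotheses $H_{k+n}(\hs)\not=0$ and $H_{k+1}(M)=0$ are exactly those of Proposition~\ref{prop:nonzerohomologya} (with $\bs=M$), producing a non-zero class $\alpha\in H_k(M\setminus N)$ mapping to zero in $H_k(M)$; since $\Phi_\nu$ is ambient-isotopic to the identity, the composition $M\setminus N\to M\setminus\nN\hookrightarrow M$ is homotopic to the inclusion $M\setminus N\hookrightarrow M$, so $(\Phi_\nu)_\ast\alpha\in H_k(M\setminus\nN)$ is non-zero and still maps to zero in $H_k(M)$.

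For the third bullet I would split $\nN$ into a compact piece $\nN\cap\overline{B_{R+1}(K)}$ and an asymptotic piece $\nN\setminus B_R(K)$, with $R$ large enough that on $M\setminus B_R(K)$ asymptotic regularity yields $|\grad V|\geq\asympconstant$ and $\|\hes V\|/|\grad V|\leq\delta$ for a pre-fixed small $\delta>0$. On the compact piece, uniform continuity of $V$ together with $V\leq -\nu$ furnishes a uniform radius $\rho_1>0$. For $q$ in the asymptotic piece, let $\gamma$ be a unit-speed minimising geodesic from $q$ to $\tilde q\in B_\rho(q)$. From $\tfrac{d}{ds}|\grad V(\gamma(s))|^2 = 2\,\hes V(\gamma',\grad V)\leq 2\delta|\grad V|^2$, Gronwall gives $|\grad V(\gamma(s))|\leq|\grad V(q)|e^{\delta s}$, hence
\begin{equation*}
V(\tilde q)-V(q) \leq \int_0^{d(q,\tilde q)}|\grad V(\gamma(s))|\,ds \leq \rho\,e^{\delta\rho}|\grad V(q)|.
\end{equation*}
Combined with $V(q)\leq-\nu\sqrt{1+|\grad V(q)|^2}$ and the elementary bounds $\sqrt{1+x^2}\geq 1$ and $\sqrt{1+x^2}\geq x$, a short case split on whether $|\grad V(q)|\leq 1$ reduces the claim $V(\tilde q)\leq-\nu/2$ to the single condition $\rho\,e^{\delta\rho}\leq\nu/2$; setting $\rho_\nu=\min(\rho_1,\rho_\ast)$ for any $\rho_\ast$ meeting this bound completes the argument.

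The main obstacle will be precisely this uniformity in the asymptotic regime where $|\grad V(q)|\to\infty$: the growth of $V$ along a short geodesic must be compared with the $\nu\sqrt{1+|\grad V|^2}$ of slack provided by $\nN$, and asymptotic regularity is exactly the ingredient that keeps the Hessian-driven variation of $|\grad V|$, and thence of $V$, controlled in this limit.
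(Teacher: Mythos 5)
Your third bullet is essentially the paper's argument (Gronwall on $|\grad V|$ along a short geodesic, powered by $\Vert\hes V\Vert\leq C|\grad V|$ outside a compact set, plus compactness near $K$), and your elementary case split on $|\grad V(q)|\lessgtr 1$ is a perfectly good substitute for the paper's analysis of $x\mapsto-\nu\sqrt{1+x^2}+\beta x$; the second bullet is also fine once the first is in place. The genuine gap is in your proof of the first bullet. First, cutting $X=-\grad V/|\grad V|^2$ off to a \emph{compactly supported} vector field cannot work: $\partial N$ is in general non-compact (e.g.\ the cylinder example in the introduction has a boundary component diffeomorphic to $\mR$), the flow of a compactly supported field is the identity outside a compact set, and there $\partial N=\{V=0\}$ and $\partial\nN\subset\{V\leq-\nu\}$ are disjoint, so no such ambient diffeomorphism can carry $\partial N$ onto $\partial\nN$. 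Second, even without the cutoff, the time-$t$ flow of $X$ satisfies $X\cdot V=-1$ and hence maps $\{V=0\}$ to $\{V=-t\}$; but $\partial\nN=\{V=-\nu\sqrt{1+|\grad V|^2}\}$ is \emph{not} a level set of $V$, and the point-dependent time needed to reach it is of order $\nu\sqrt{1+|\grad V|^2}$, which is unbounded when $|\grad V|$ is unbounded at infinity. So a fixed-parameter flow does not produce $\Phi_\nu$, and repairing this requires exactly the uniform control at infinity that your sketch postpones.

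The paper's route is designed around this non-compactness: it works with $f=V/\sqrt{1+|\grad V|^2}$, whose sublevel set $\{f\leq-\nu\}$ is precisely $\nN$, shows via asymptotic regularity that $f$ satisfies the Palais--Smale condition (a divergent sequence would force $|\grad f|\geq\tfrac12\asympconstant/\sqrt{1+\asympconstant^2}$) and that $f$ has no critical values in $(-\nu,0)$ for $\nu$ small, and then invokes Palais's deformation theorem on the complete manifold $M$ to conclude $N\cong\nN$ and $M\setminus N\cong M\setminus\nN$. If you want to keep your flow picture, the fix is to flow along $-\grad f/|\grad f|^2$ (so that level sets of $f$, i.e.\ the family $\partial\nN$, are carried into one another at unit speed) and to justify completeness of this flow on the strip $N\setminus\nN$ uniformly at infinity --- which is in substance the Palais--Smale/Palais argument of the paper rather than a compactly supported isotopy.
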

\begin{proof}
Consider the function $f:M\rightarrow \mR$ defined by
\begin{equation*}
\label{eq:f}
f(q)=\frac{V(q)}{\sqrt{1+|\grad V(q)|^2}}.
\end{equation*}
The gradient flow of this function induces the diffeomorphism. Because $N$ is non-compact, the Gradient Deformation Lemma does not apply. However, following the estimates of Lemma 14 of~\cite{bpv}, it follows that this function satisfies the condition of Palais and Smale, and has no critical values between $0$ and $-\nu$. A theorem of Palais~\cite[Theorem 10.2]{palaishilbert} now shows that $N$ and $\nN$ are diffeomorphic through the gradient flow defined by this function. 
Proposition~\ref{prop:nonzerohomologya} therefore shows that there exists a non-zero class in $H_{k+1}(M\setminus \nN)$ that is mapped to zero in $H_k(M)$ by the morphism induced by the inclusion.
We now estimate $V$ uniformly on balls of radius $\rho_\nu$ around points of $N_\nu$. By continuity and compactness, there exists a $\rho_\nu>0$ such that for all $q\in N_\nu$ with $d(q,K)<1$, and all $\tilde q\in B_{\rho_\nu}(q)$, the estimate $V(\tilde q)\leq -\frac{\nu}{2}$ holds. 
For $q\in N_\nu$ with $d(q,K)\geq 1$, if $\rho_\nu<1$, then $d(\tilde q,K)>0$ for all $\tilde q\in B_{\rho_\nu} (q)$.
If $\rho_\nu<\inj M$, there exists a unique shortest geodesic $\gamma$ parameterized by arclength $\rho_\nu'<\rho_\nu$, and Equation~\bref{eq:gronwall} holds. We compute
\begin{equation*}
\begin{split}
V(\tilde q)=V(q)+\int_0^{\rho_\nu'}\frac{d}{ds} V(\gamma(s))ds,
&\leq V(q)+|\grad V(q)|\frac{e^{C\rho_\nu'}-1}{C} \\&\leq-\nu \sqrt{1+|\grad V(q)|^2}+|\grad V(q)|\frac{e^{C\rho_\nu}-1}{C}.
\end{split}
\end{equation*}
The function $x\mapsto-\nu \sqrt{1+x^2}+\frac{e^{C\rho_\nu}-1}{C} x$ has the maximum $-\sqrt{\nu^2-\left(\frac{e^{2 C\rho_\nu}-1}{C}\right)^2}$ for $\frac{e^{2 C\rho_\nu}-1}{C}\leq \nu$. We can find $\rho_\nu>0$ small such that $V(\tilde q)\leq -\frac{\nu}{2}$. This is independent of $q$, because so is $C$.
\end{proof}
\begin{figure}
\def\svgwidth{.4\textwidth}
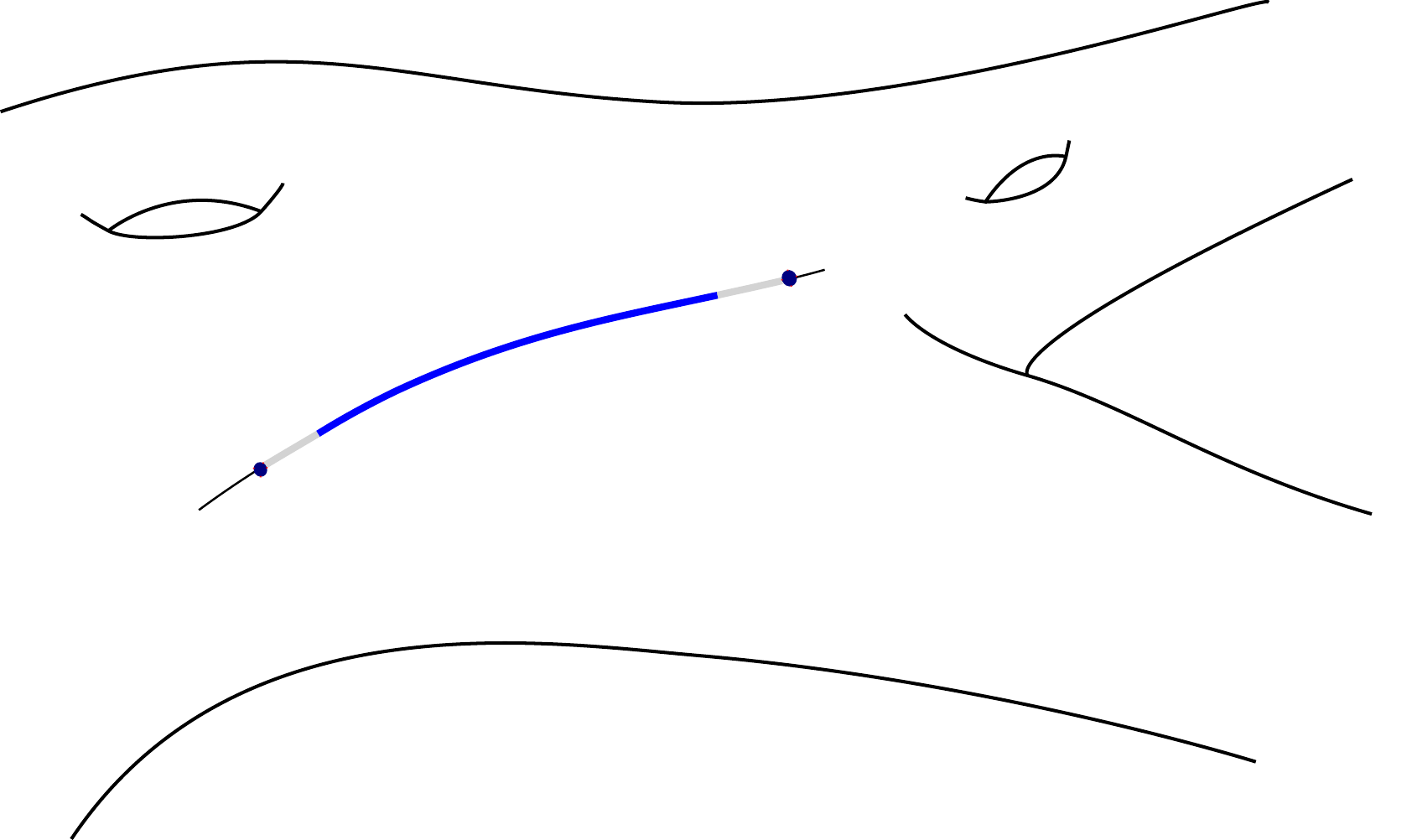
\def\svgwidth{.4\textwidth}
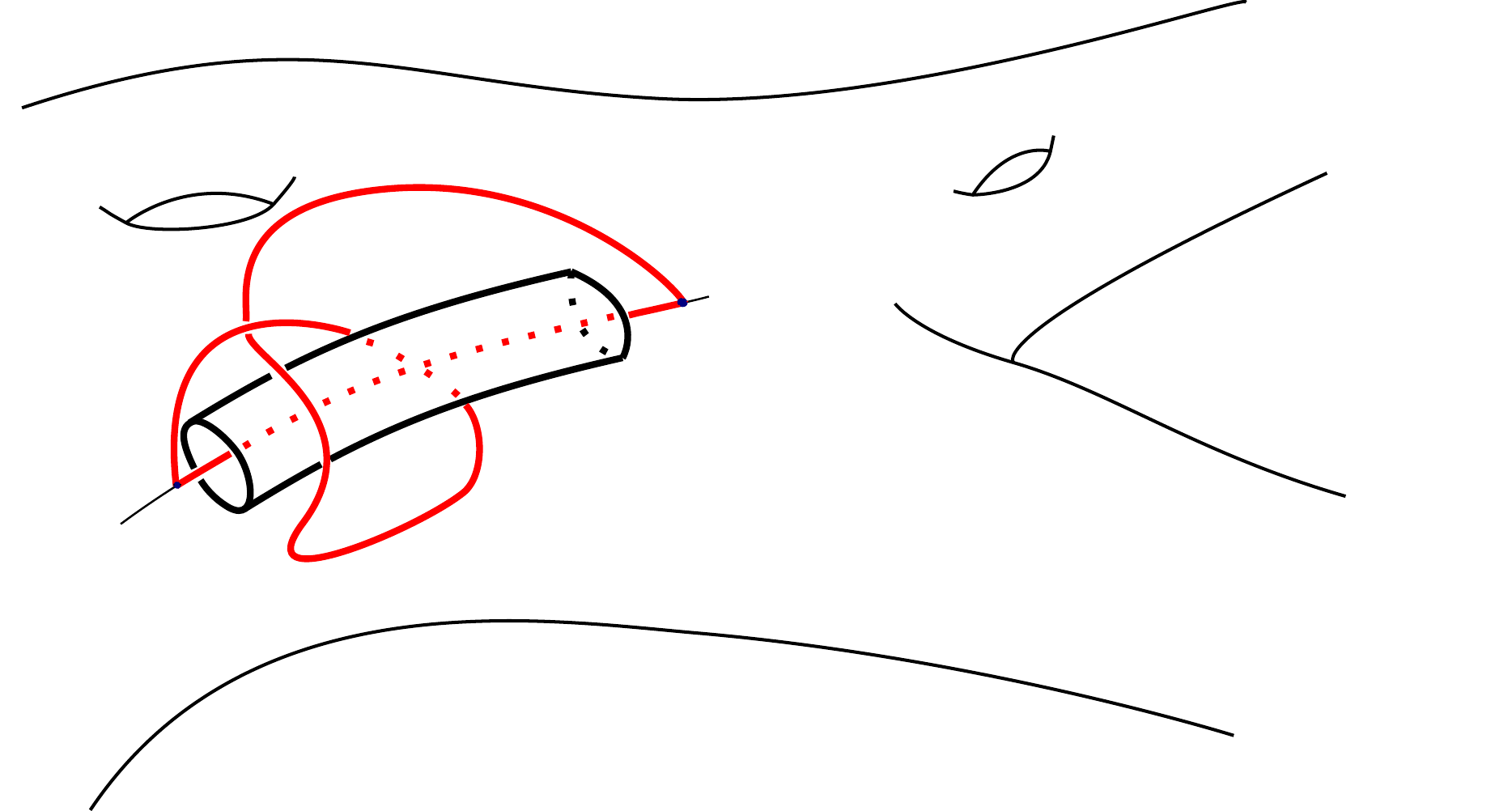
\caption{Sketches of the domain $\Lambda M\times\mR$ of the functional $\A$. The manifold $M$ is embedded in $\Lambda M$ by the map sending $q\in M$ to the constant loop $c_q(s)=q$, and hence is embedded in $\Lambda M\times\mR$. On the left, the $(k)$-link in the base manifold, between $W$ and $N_\nu$ is shown. This link obviously does not persist in the loop space. However, it is possible to \emph{lift} the link to $\Lambda M\times\mR$, depicted on the right, to the sets $A$ and $B$, which $(k+1)$-link in $\Lambda M\times \mR$.}
\end{figure}

\subsection{Constructing linking sets.}
We will use the topological assumptions in  Theorem \ref{thm:main2}, to construct linking subspaces of the loop space. These are in turn used to find candidate critical values of the functional $\A$. 

The goal is to construct a link in the function space with right bounds for $\A$ on the linking sets. We will in fact construct this link in a tubular neighborhood of $M$ in $\Lambda M$.

By the assumption of bounded geometry, we can construct a well behaved tubular neighborhood. Let $NM$ be the normal bundle of $\inclloop(M)$ in $\Lambda M$. Recall that we denote the constant loop at $q\in M$, by $c_q$, thus $c_q(s)=q$ for all $s\in \mS$. Elements $\xi\in N_{c_q}M$ are characterized by the fact that $\int_{\mS^1} \xi(s)ds=0$. Assuming that $M$ is of bounded geometry, we get a uniform tubular neighborhood in the loop space. 

\begin{proposition}
\label{propo:tubneighborhood}
Assume that $M$ is of bounded geometry. Then there exists an open neighborhood $\tub$ of $\inclloop(M)$ in $\Lambda M$ and a diffeomorphism $\phi :NM\rightarrow \tub$, with the property that it maps $\xi\in NM$ with $\Vert \xi\Vert_{H^1}\leq \frac{\inj M}{2}$ to $\phi(\xi)\in \Lambda M$ with $d_{H^1}(c_q,\phi(\xi))=\Vert  \xi\Vert_{H^1}$.
\end{proposition}

The inclusion of the zero section in the normal bundle is denoted by $\incl: M\rightarrow NM$. The zero section of the normal bundle is mapped diffeomorphically into $\inclloop(M)\subset \Lambda M$ by $\phi$. The norm $\Vert \cdot \Vert_{\perp}$ defined by
$\Vert \xi \Vert_{\perp}=\int_{\mS^1}\langle \triangledown \xi (s), \triangledown \xi(s)\rangle\, ds$,
is equivalent to the norm $\Vert \cdot \Vert_{H^1}$. To be precise the following estimate holds
\begin{equation}
\label{eq:normalestimate}
\Vert \xi \Vert_\perp\leq\Vert \xi \Vert_{H^1}\leq \sqrt{2}\Vert \xi \Vert_\perp.
\end{equation}

By Proposition~\ref{prop:nonzerohomologya} and Lemma~\ref{lem:modifyN} there exists a non-zero $\hw\in H_k(M \setminus N_\nu)$ such that $i_k(\hw)=0$ in $H_k(M)$. In this formula $i$ is the inclusion $i:M\setminus N_\nu\rightarrow M$, and $i_k$ the induced map in homology of degree $k$. Because $i_k\hw=0$, there exists a $\cu\in C_{k+1}(M)$ such that $\partial \cu=\cw$. We disregard any connected component of $\cu$ that does not intersect $\cw$. Set $W=|\cw|$ and $U=|\cu|$ where $|\cdot|$ denotes the support of a cycle. Both are compact subspaces of $M$. Note that we can assume that $W$ is contained in $M\setminus N$, because $M\setminus N_\nu$ is a deformation retract of $M\setminus N$. The inclusion $H_k(W)\rightarrow H_k(M\setminus N_\nu)$ is non-trivial by construction. We say that $W$ $(k)$-links $N_\nu$ in $M$.
The linking sets discussed above will be used to construct linking sets in the loop space, satisfying appropriate bounds, cf.~Proposition~\ref{prop:estimates}. A major part of this construction is carried out by the ``hedgehog'' function, which is a continuous map $\hedg:[0,1]\times U\rightarrow \Lambda M$ with the following properties
\begin{enumerate}
\item[(i)] $\hedg_0(U)\subset \tub$, with the tubular neighborhood $\tub$ defined in Proposition\ \ref{propo:tubneighborhood}.
\item[(ii)] The restriction $\hedg_t\bigr|_W$ is the inclusion of $W$ in the constant loops in $\Lambda M$,
\item[(iii)] Only $W$ is mapped to constant loops. Thus $h_t(q)\in \inclloop(M)$ if and only if $q\in W$.
\item[(iv)] $\int_0^1V(\hedg_1(q)(s))ds>0$ for all $q\in U$.
\end{enumerate}

The construction is similar, but not equivalent to the construction of such a function in the appendix of~\cite{bpv}. The reason that this construction cannot be followed ad verbatim, is that the topology of the loop space might be non-trivial and that the global interpolation operators used there cannot be defined. The construction has to be done locally: for $t=0$, a point $q\in U$ is mapped to a loop close (in $H^1$ sense) to the constant loop $c_q(s)=q$. Points on the boundary $W$ are mapped to constant loops, but other points are never mapped to a constant loop. This ensures the first three properties (for $t=0$). By construction, there are a finite number of points where the loops stay for most of the time. These points are then homotoped to points where the potential is positive. This ensures the last property, using compactness of $U$. The details of the construction are given in~\cite{Rot:ww}.

Properties (i) and (ii) are used to lift the link of $M$ to a link in $\Lambda M$. The remaining properties are used to deform the link to sets where the functional satisfies appropriate bounds, and show that the link is not destroyed during the homotopy.  

Because $U$ is compact, and $N_\nu$ is closed, $\inclloop(U\cap N_\nu)$ is compact. Moreover, it does not intersect $h_t(U)$ for any $t$, by property (iii). Hence $d_{\Lambda M}(h_{[0,1]}(U),\inclloop(U\cap N_\nu))>0$. Set $0<\rho<\min(\frac{\inj M}{2}, \frac{\rho_\nu}{2})$ such that 
\begin{equation}
  \label{eq:rho}
\rho<\frac{1}{2}d_{\Lambda M}(h_{[0,1]}(U),\inclloop(U\cap N_\nu)).
\end{equation}

\begin{figure}[t]
\def\svgwidth{.7\textwidth}
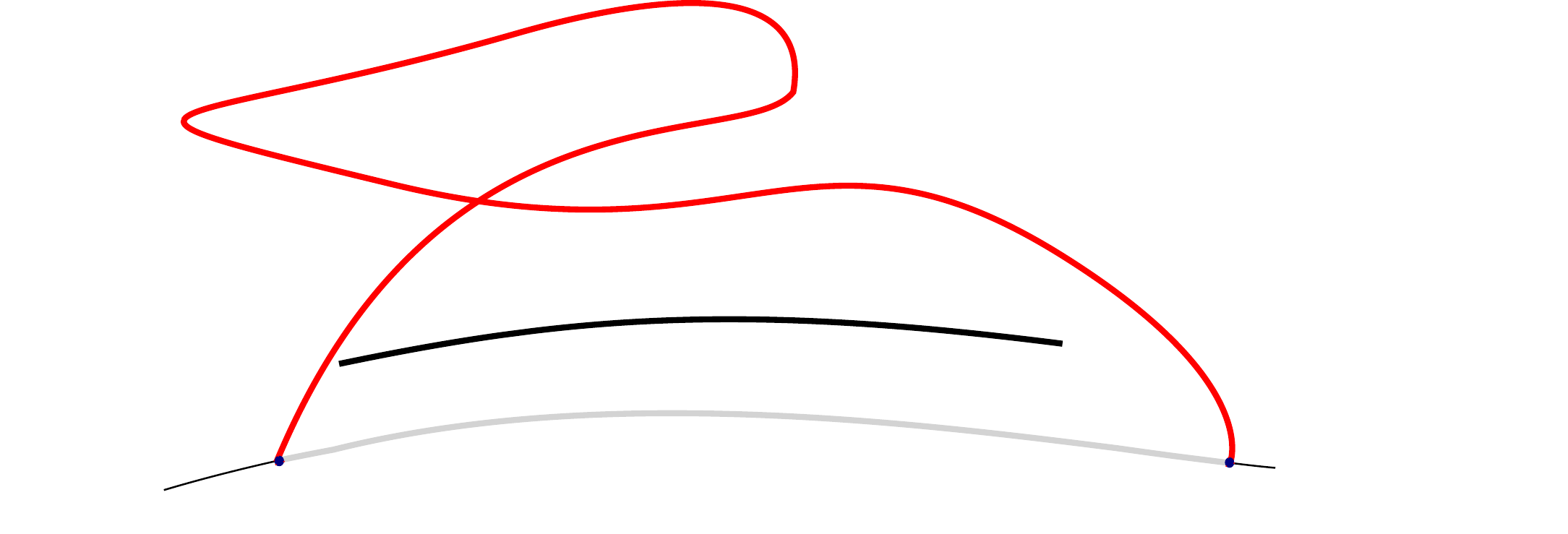

\caption{The projected normal bundle $\proj(NM)=M\times\mR$ is depicted. The set $Z=U\times\{0\}\cup \hat \pi(f(U))$ and $N_\nu\times \{\rho\}$ $(k+1)$ link in $M\times \mR$. }
\label{fig:projectednormal1}
\end{figure}

Define $f:U\rightarrow NM$ by the equation $f(q)=\phi^{-1} \hedg_0(q)$, where $\phi:NM\rightarrow \tub$ is defined in Proposition~\ref{propo:tubneighborhood}. The restriction of $f$ to $W$ is the inclusion of $W$ into the zero section of $NM$ by Property (ii). Recall that the normal bundle comes equipped with the equivalent norm $\Vert\cdot\Vert_\perp$, cf.~\bref{eq:normalestimate}. Define the map $\proj:NM\rightarrow M \times \mR$ by
\begin{equation*}
\proj(q,\xi)=(q,\Vert\xi\Vert_{\perp}).
\end{equation*}
Define $S=\proj^{-1}(N_\nu\times\{\rho\})$. This is a sphere sub-bundle of radius $\rho$ in the normal bundle over $N_\nu$. Recall that the inclusion of $M$ as the zero section in $NM$ is denoted by $\incl:M\rightarrow NM$. Set 
$$Z=\proj(\incl(U)\cup f(U))=U\times\{0\}\cup \proj(f(U)).$$
The sets are depicted in Figure~\ref{fig:projectednormal1}. Because $W$ $(k)$-links $N_\nu$ in $M$, the set $Z$ $(k+1)$-links $\proj(S)=N_\nu\times\{\rho\}$ in $M\times \mR$, as we prove below.

\begin{lemma}
\label{lem:step1}
The inclusion
$
H_{k+1}(Z)\rightarrow H_{k+1}(M\times \mR\setminus N_\nu\times\{\rho\})
$
is non-trivial. 
\end{lemma}

\begin{proof}

Recall that $W=|\cw|$ and $\cw=\partial \cu$ with $\cu\in C_{k+1}(M)$ a $(k+1)$-cycle. Define the cycle $\cx\in C_{k+1}(Z)$ by
\begin{equation*} 
\cx=\proj_{k+1}\incl_{k+1}(\cu)-\proj_{k+1}f_{k+1}(\cu),
\end{equation*}
This cycle is closed, because
\begin{align*}
\partial \cx=&\proj_{k}\incl_{k}(\partial \cu)-\proj_{k}f_{k}(\partial \cu)\\
=&\proj_{k}\incl_{k}(\cw)-\proj_{k}f_{k}(\cw)=0.
\end{align*}
In the last step we used that $f\bigr|_W=\incl\bigr|_W$. Hence $\hx\in H_{k+1}(Z)$. We show that this class is mapped to a non-trivial element in $H_{k+1}(M\times \mR \setminus N_\nu\times \{\rho\})$

For technical reasons we need to modify $Z$ and $N_\nu\times \{\rho\}$. Define the set $\tilde Z$ 
by
\begin{equation*}
 \tilde Z=U\times\{0\}\cup W\times[0,\rho]\cup T_{\rho}(\proj f(U)),
\end{equation*} 
where $T_{\rho}:M\times \mR\rightarrow M\times \mR$ is the translation over $\rho$ in the $\mR$ direction, i.e. $T_\rho(q,r)=(q,r+\rho)$. Denote by $I_\rho$ the interval $(\frac{\rho}{3},\frac{2\rho}{3})$. There exists a homotopy $\ho_t:M\times \mR\rightarrow M \times \mR$, with the following properties:
\begin{enumerate}
\item[(i)] $\ho_0=\id,$
\item[(ii)] $\ho_t(\tilde Z)\cap \ho_t(N_\nu\times I_\rho)=\emptyset,\quad\text{for all}\quad t,$
\item[(iii)] $\ho_1(\tilde Z)=Z,$
\item[(iv)] $\ho_1(N_\nu\times I_\rho)=N_\nu\times\{\rho\}.$
\end{enumerate}
These properties ensure that $Z$ $(k+1)$-links $N_\nu\times\{\rho\}$ if and only if $\tilde Z$ $(k+1)$-links $N_\nu\times I_\rho$. 
Define $[\cxt]=(\ho_1)^{-1}_{k+1}\hx\in H_{k+1}(\tilde Z)$. This is well defined because $(m_1)_{k+1}$ is an isomorphism. We will reason that this class includes non-trivially in $H_{k+1}(M\times \mR \setminus N_\nu\times I_\rho)$. For this we apply Mayer-Vietoris to the triad $(\tilde Z, U_1,U_2)$, with 
\begin{equation*}
\begin{split}
U_1=&U\times \{0\}\cup W\times [0,\frac{2\rho}{3}),\\
U_2=& W\times (\frac{\rho}{3},\rho]\cup T_{\rho}\,\proj f(U).\\
\end{split}
\end{equation*}
Note that $U_1\cap U_2= W\times I_\rho$. From the Mayer-Vietoris sequence for the triad we get the boundary map
\begin{equation*}
\xymatrix{H_{k+1}(\tilde Z)\ar[r]^-{\delta}&H_k(W\times I_\rho)}.
\end{equation*}
By definition of the boundary map $\delta$ in the Mayer-Vietoris sequence, we have that $\delta[\cxt]=(m_1)^{-1}_{k}\,\proj_{k}\,\incl_{k}\hw$. Now we consider a second Mayer-Vietoris sequence, the Mayer-Vietoris sequence of the triad
\begin{equation*}
\left(M\times \mR \setminus N_\nu\times I_\rho,M\times \mR_{>\frac{\rho}{3}} \setminus N_\nu\times I_\rho,M\times \mR_{<\frac{2\rho}{3}}\setminus N_\nu\times I_\rho\right).
\end{equation*}
By naturality of Mayer-Vietoris sequences, the following diagram commutes
\begin{equation*}
\xymatrix{
H_{k+1}(\tilde Z)\ar[d]_-{i_{k+1}}\ar[r]^-{\delta}&H_{k}(W\times I_\rho)\ar[d]^-{i_k}\\
H_{k+1}(M\times \mR\setminus N_\nu\times I_\rho)\ar[r]^-{\delta}&H_{k}(M\times I_\rho\setminus N_\nu\times I_\rho).\\
}
\end{equation*}
We argued that $\delta[\cxt]=(m_1)^{-1}_{k} \,\proj_{k}\,\incl_{k}\hw$. We have that $i_k(m_1)^{-1}_k\,\proj_k\,\incl_k\hw\not=0$ by assumption. By the commutativity of the above diagram we conclude that $i_{k+1}[\cxt]\not=0$. Thus $\tilde Z$ $(k+1)$-links $N_\nu\times I_\rho$ in $M\times\mR$, which implies that $Z$ $(k+1)$-links $N_\nu\times \{\rho\}$ in $M\times \mR$.
\end{proof}

The previous lemma lifted the link in the base manifold to a link in $M\times \mR$. We now lift this link to the full normal bundle.
\begin{lemma}
\label{lem:step2}
The inclusion
$H_{k+1}(\incl(U)\cup f(U))\rightarrow H_{k+1}(NM\setminus S)$ is non-trivial.
\end{lemma}
\begin{proof}
The following diagram commutes
\begin{equation*}
\xymatrix{
H_{k+1}(\incl(U)\cup f(U))\ar[r]^-{\proj_{k+1}}\ar[d]_{i_{k+1}}&H_{k+1}(Z)\ar[d]^{i_{k+1}}\\
H_{k+1}(NM\setminus S)\ar[r]^-{\proj_{k+1}}&H_{k+1}(M\times\mR\setminus N_\nu\times\{0\}).
}
\end{equation*}
Define $\hyp=\incl_{k+1}\hu - f_{k+1}\hu$. Recall that $\hx=\proj_{k+1}\hyp$ includes non-trivially in $H_{k+1}(M\times\mR\setminus N_\nu\times\{\rho\})$ by the construction in lemma \ref{lem:step1}. By the commutativity of the above diagram $\pi_{k+1}i_{k+1}\hyp=i_{k+1}\pi_{k+1}\hyp\not=0$. Thus $i_*\hyp\not=0$. The inclusion $H_{k+1}(\incl(U)\cup f(U))\rightarrow H_{k+1}(NM\setminus S)$ is non-trivial. 
\end{proof}

The domain of $\A$ is not the free loop space $\Lambda M$, but $\Lambda M\times \mR$. The extra parameter keeps track of the period of the candidate periodic solutions. Thus we need once more to lift the link to a bigger space. In this process we also globalize the link, moving it from the normal bundle to the full free loop space. Recall that we write $E=\Lambda M\times \mR$. The subsets $A^t=A_I\cup A_{II}\cup A^t_{III}$ are defined by
\begin{equation*}
\begin{split}
A_{I}&=\phi(\incl(U))\times\{\sigma_1\}\\
A_{II}&=\phi(\incl(W))\times[\sigma_1,\sigma_2]\\
A^t_{III}&=h_t(U)\times\{\sigma_2\}\\
\end{split}
\end{equation*}
The constants $\sigma_1<\sigma_2$ will be specified in Proposition \ref{prop:estimates}. Finally we define the sets $A, B\subset E$ by
\begin{equation}
\label{eq:AB}
A=A^1\qquad\text{and}\qquad B=\phi(S)\times\mR.
\end{equation}
Figure~\ref{fig:belly} depicts the sets $A$ and $B$.

\begin{lemma}
\label{lem:link}
The inclusion $H_{k+1}(A)\rightarrow H_{k+1}(E\setminus B)$ is non-trivial.
\end{lemma}
\begin{proof}
By Lemma \ref{lem:step2} the morphism induced by the inclusion $H_{k+1}(\incl(U)\cup f(U))\rightarrow H_{k+1}(NM\setminus S)$ is non-trivial. By applying the diffeomorphism $\phi$, we see therefore that 
\begin{equation*}
H_{k+1}(\phi(\incl(U))\cup \phi( f(U)))\rightarrow H_{k+1}(\tub\setminus \phi (S)),
\end{equation*}
is non-trivial. 
Because the base manifold (seen as the constant loops) is a retract (but not necessarily a deformation retract) of the loopspace we have the following relation:
\begin{equation}
\label{eq:splithomology}
H_*(\Lambda M)\cong H_*(M)\oplus H_*(\Lambda M, M).
\end{equation}
We assume $H_{k+2}(\Lambda M)=0$, thus $H_{k+2}(\Lambda M,M)=0$. The tubular neighborhood $\cV$ deformation retracts to $M$, hence we have $H_{k+2}(\Lambda M,\cV)\cong H_{k+2}(\Lambda M,M)$. Since $\phi(S)$ is closed and contained in the interior of $\cV$, we can excise $\phi(S)$. This gives an isomorphism $H_{k+2}(\Lambda M\setminus  \phi(S),\cV\setminus  \phi(S))\cong H_{k+2}(\Lambda M,\cV)\cong 0 $. The long exact sequence of the pair $(\Lambda M\setminus \phi(S),\cV \setminus  \phi(S))$ now gives that $H_{k+1}(\tub\setminus \phi(S))\rightarrow H_{k+1}(\Lambda M\setminus \phi(S))$ is injective.
It follows that

$H_{k+1}(\phi(\incl(U)\cup f(U)))\rightarrow H_{k+1}(\Lambda M\setminus \phi(S)),$

is non-trivial. Let $\pi_1:\Lambda M \times \mR\rightarrow \Lambda M$ be the projection to the first factor. Because of the choice of $\rho$, cf. Equation~\bref{eq:rho} the set $\pi_1(A^t)$ never intersects $\pi_1(B)$. By the construction of the sets $A^t$ and $B$, the map $\pi_1$ induces a homotopy equivalence between $A^t$ and $\pi_1(A^t)$ and between $E\setminus B$ and $\Lambda M\setminus \pi_1(B)$, so that the diagram
\begin{equation*}
\xymatrix{
H_{k+1}(A^t)\ar[d]_{(\pi_1)_{k+1}}\ar[r]&H_{k+1}(E\setminus B)\ar[d]^{(\pi_1)_{k+1}}\\
H_{k+1}(\pi_1(A^t))\ar[r]&H_{k+1}(\Lambda M\setminus \pi_1(B)),\\
}
\end{equation*}
commutes. We see that $H_{k+1}(A^t)\rightarrow H_{k+1}(E\setminus B)$ is non-trivial if and only if $H_{k+1}(\pi_1(A^t))\rightarrow H_{k+1}(\Lambda M \setminus \pi_1(B))$ is non-trivial. For all $t\in[0,1]$ the induced maps are the same, because of homotopy invariance. For $t=0$ we have that $\pi_1(A^0)=\phi(\incl(U)\cup f(U))$, and $\Lambda M\setminus \pi_1(B)=\Lambda M\setminus \phi(S)$. We conclude that $A$ $(k+1)$-links $B$ in $\Lambda M$.
\end{proof}
\begin{figure}
\def\svgwidth{.6\textwidth}
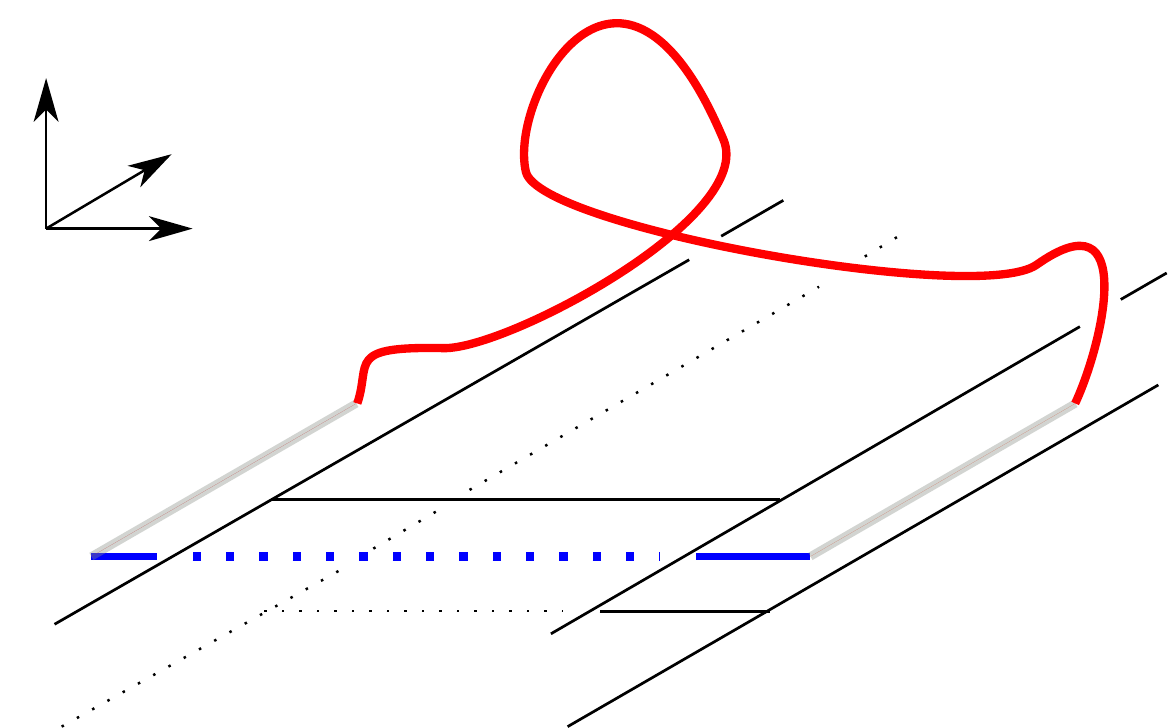
\caption{A sketch of the linking sets $A=A_I\cup A_{II}\cup A^1_{III}$ and $B$. For graphical reasons the base manifold $M$ and the loop space $\Lambda M$ are depicted as one-dimensional.}

\label{fig:belly}
\end{figure}
\section{Estimates}
\label{sec:estimates}
We need to estimate $\A$ on the sets $A,B\subset E$, defined in Equation~\bref{eq:AB}.

\begin{proposition}
\label{prop:estimates}
If $\nu$ and $\rho$ are sufficiently small, then there exist constants $\sigma_1<\sigma_2$ and $0<a<b$,  such that\begin{equation}
\A\bigr|_A\leq a\qquad\text{and}\qquad \A\bigr|_B>b.
\end{equation}
\end{proposition}
\begin{proof}
We first estimate $\A$ on $B=\phi(S)\times \mR$. Let $(c_1,\tau)\in \phi(S)\times\mR$. Then $c_1=\phi(\xi)=\exp_{c_0}(\xi)$ where $\xi$ is a vector field along a constant loop $c_0$ at $q\in N_\nu$, for which $\Vert\xi\Vert_{\perp}=\Vert\triangledown\xi\Vert_{L^2}=\rho$. From the Gauss lemma, and the following estimate, cf. Equations~\bref{eq:inequality} and~\bref{eq:normalestimate},
$$
\Vert{\xi}\Vert_{C^0}\leq \sqrt{2}\Vert\xi\Vert_{H^1}\leq 2\Vert\xi\Vert_\perp
$$
we see that $\sup_{s\in\mS^1}d_M(c_0(s),c_1(s))\leq 2\rho$. Recall that we assumed $\rho\leq \frac{\rho_\nu}{2}$. Hence for all $s\in\mS^1$, we have $c_1(s)\in B_{\rho_\nu}(q)$ and therefore $V(c_1(s))\leq-\frac{\nu}{2}$, by Lemma \ref{lem:modifyN}. We use this to estimate the second term of
\begin{equation}
\A(c_1,\tau)=\frac{e^{-\tau}}{2}\int_0^1|c'_1(s)|^2ds-e^\tau\int_0^1V(c_1(s))ds.\\
\end{equation}
Let us now concentrate on the first term. We construct the geodesic from $c_0$ to $c_1$ in the loop space, namely $c_t(s)=\exp_{c_0}(t\xi(s)).$ This can also be seen as a singular surface in $M$, cf.~\cite{Klingenberg_Riemannian}. Now we apply Taylor's formula with remainder to $t\mapsto \cE(c_t)$. There exists a $0\leq \tilde t\leq 1$ such that
\begin{equation}
\label{eq:Taylor}
\cE(c_1)=\cE(c_0)+\frac{d}{dt}\cE(c_t)\Bigr|_{t=0}+\frac{1}{2}\frac{d^2}{dt^2}\cE(c_t)\Bigr|_{t=0}+\frac{1}{6}\frac{d^3}{dt^3}\cE(c_t)\Bigr|_{t=\tilde t}.
\end{equation}
The first term $\cE(c_0)=0$, since $c_0$ is a constant loop. Because $t\mapsto c_t$ is a geodesic $\frac{d}{dt}\bigr|_{t=0}\cE(c_t)=0$. The second order neighborhood of a closed geodesic is well studied~\cite[Lemma 2.5.1]{Klingenberg_Riemannian}. We see that $c_0$ is a (constant) closed geodesic, therefore
$$
\frac{d^2}{dt^2}\Bigr|_{t=0}\cE(c_t)=D^2\cE(c_0)(\xi,\xi)=\Vert\xi \Vert_{\perp}^2=\rho^2.
$$ 
The curvature term in the second variation vanishes at $t=0$, because $c_0$ is a constant loop. The third derivative of the energy functional can be bounded in terms of the curvature tensor and its first covariant derivative times a third power of $\Vert\xi\Vert_\perp$. By the assumption of bounded geometry, we can therefore uniformly bound $\cE(c_1)$.
The main point is that for $\rho$ sufficiently small, $\cE(c_1)\geq C \rho^2$, for some constant $C>0$. We now can estimate $\A$ on $B$.
\begin{equation}
\A(c_1,\tau)\geq \frac{e^{-\tau}}{2}C\rho^2+\frac{e^\tau}{2}\nu\geq \sqrt{C\,\nu}\rho
\end{equation}
Set $b=\sqrt{C\,\nu}\rho$, then $\A|_B>b$. It remains to estimate $\A$ on the set $A=A_I\cup A_{II}\cup A^1_{III}$. Let $(c,\sigma_1)\in A_I=\phi(\incl(U))\times \{\sigma_1\}$. Recall that $U$ is compact, hence $V_{\max}=\sup_{q\in U}-V(q)<\infty$. Because $c$ is a constant loop, we find
\begin{equation}
\A(c,\sigma_1)=-e^{\sigma_1}\int_0^1 V(c(s))ds\leq e^{\sigma_1} V_{\max}.
\end{equation}
By choosing $\sigma_1\leq \log(\frac{b}{2V_{\max}})$ we get $\A\bigr|_{A_I}\leq b/2$. On $A_{II}=\phi(W)\times[\sigma_1,\sigma_2]$ all the loops are constants as well, moreover their image is contained in $W$. The potential is positive on $W$ hence $\A\bigr|_{A_{II}}<0<\frac{b}{2}$. It remains to estimate $\A$ on $A^1_{III}=h_1(U)\times \{\sigma_2\}$. Recall that we constructed $h$ in such a way that for any $q\in U$ we have $\int_0^1V(h_1(q)(s))ds>0$. This gives
\begin{equation}
\begin{split}
\A(c,\sigma_2)=&\frac{e^{-\sigma_2}}{2}\int_0^1|h_1(q)'(s)|^2ds-e^{\sigma_2}\int_0^1V(h_1(q)(s))ds\\
\leq&\frac{e^{-\sigma_2}}{2}\int_0^1|h_1(q)'(s)|^2ds.
\end{split}
\end{equation}
Because $h$ is continuous and $U$ is compact, $\cE_{\max}=\sup_{q\in U}\cE(h_1(q))<\infty$. And therefore
\begin{equation}
\A(c,\sigma_2)\leq \frac{e^{-\sigma_2}}{2}\cE_{\max}.
\end{equation}
By setting $\sigma_2>\max(\log(\frac{\cE_{\max}}{b}),\sigma_1)$ we get $\A\bigr|_{A^1_{III}}\leq \frac{b}{2}$. Now set $a=b/2$, and we see that $\A|_A<a<b$.
\end{proof}

\section{Proof of the main theorem}
\label{sec:proof}

\begin{proof}[Proof of Theorem~\ref{thm:main2}]

From the assumptions $H_{k+1}(\Sigma)\not=0$ and $H_{k+1}(\Lambda M)=H_{k+2}(\Lambda M)=0$, we are able to construct linking sets $A$ and $B$ in the loop space,  cf.~Lemma~\ref{lem:link}. We estimate $\A$ on $A$ and $B$ in Proposition~\ref{prop:estimates}. We now use Lemma 13 of~\cite{bpv}, with the minor caveat that the proof uses the fact that $H_{k+1}(\Lambda M\times\mR)\cong 0$ after formula (17) of this paper, which is automatically true for $M=\mR^{2n}$, but which we assume a priori here.   
\end{proof}

\bibliographystyle{abbrv}
\bibliography{weinstein}

\end{sloppypar}
\end{document}